\theoremstyle{definition}
\newtheorem{theorem}{Theorem}[section]
\newtheorem{proposition}[theorem]{Proposition}
\newtheorem{lemma}[theorem]{Lemma}
\newtheorem{definition}[theorem]{Definition}
\newtheorem{remark}[theorem]{Remark}
\newtheorem{problem}[theorem]{Problem}
\newtheorem{example}[theorem]{Example}
\newtheorem{conjecture}[theorem]{Conjecture}
\newtheorem{corollary}[theorem]{Corollary}
\newcommand{\R}{\mathbb{R}}   % Real numbers 
\renewcommand{\epsilon}{\varepsilon}    % epsilon 
\newcommand{\supp}{{\rm supp}}   % support 
\newcommand{\Ber}{{\mathbf b}}  % Bernoulli distribution on T
\newcommand{\PK}{{\mathbf p}}  % Poisson kernel / Wrapped Cauchy 
\begin{document}

\title{{\bf \Large{Unimodality for free multiplicative convolution with free normal distributions on the unit circle}}}
\author{\Large{Takahiro Hasebe and Yuki Ueda}}
\date{}

\maketitle

\begin{abstract}
We study unimodality for free multiplicative convolution with free normal distributions $\{\lambda_t\}_{t>0}$ on the unit circle. We give four results on unimodality for $\mu\boxtimes\lambda_t$: (1) if $\mu$ is a symmetric unimodal distribution on the unit circle then so is $\mu\boxtimes \lambda_t$ at any time $t>0$; (2) if $\mu$ is a symmetric distribution on $\mathbb{T}$ supported on $\{e^{i\theta}: \theta \in [-\varphi,\varphi]\}$ for some $\varphi \in (0,\pi/2)$, then $\mu \boxtimes \lambda_t$ is unimodal for sufficiently large $t>0$; (3) $\Ber \boxtimes \lambda_t$ is not unimodal at any time $t>0$, where $\Ber$ is the equally weighted Bernoulli distribution on $\{1,-1\}$; (4) $\lambda_t$ is not freely strongly unimodal for sufficiently small $t>0$. Moreover, we study unimodality for classical multiplicative convolution (with Poisson kernels), which is useful in proving the above four results. 
\end{abstract}

Keywords: classical/free multiplicative convolution, Poisson kernel, free normal distribution on the unit circle, unimodality, classical/free strong unimodality

%%%%%%%%%%%%%

\section{Introduction}
In free probability, the semicircle distribution 
\begin{equation}
S(m,v)(dx):=\frac{1}{2\pi v} \sqrt{4v-(x-m)^2} \cdot 1_{[m-2\sqrt{v}, m+2\sqrt{v}]}(x)\,dx, \qquad x\in\mathbb{R}
\end{equation} 
with mean $m \in \mathbb R$ and variance $v>0$ plays the role of the normal distribution in probability theory. 
The distribution of the sum of two free random variables which follow $\mu$ and $\nu$ respectively is denoted by $\mu \boxplus \nu$. The operation $\boxplus $ is called {\it free additive convolution} (see \cite{BV93}).  The probability measure $\mu \boxplus S(0,t)$ is of particular importance since it describes the law of free Brownian motion with initial distribution $\mu$. In \cite{Bia97(3)}, Biane studied regularity of $\mu \boxplus S(0,t)$, in particular, gave a density formula of $\mu\boxplus S(0,t)$.

One notion that captures a visual aspect of a probability measure is {\it unimodality}. 
By using Biane's density formula, the authors studied unimodality for $\mu \boxplus S(0,t)$ in \cite{HU18} and proved the following.  
\begin{enumerate}[(1)]
\item {\bf Unimodality for $\mu \boxplus S(0,t)$}: let $\mu$ be a symmetric unimodal probability measure on $\mathbb{R}$. Then $\mu \boxplus S(0,t)$ is unimodal at any time $t>0$.
\item {\bf Eventual unimodality for $\mu \boxplus S(0,t)$}: let $\mu$ be a compactly supported probability measure on $\mathbb{R}$. Then $\mu \boxplus S(0,t)$ is unimodal for sufficiently large $t>0$.
\item {\bf Non-unimodality for $\mu \boxplus S(0,t)$}: there exists a (non-compactly supported) probability measure $\mu$ on $\mathbb{R}$, such that $\mu\boxplus S(0,t)$ is not unimodal at any time $t>0$.
\item {\bf Failure of freely strong unimodality of $S(0,1)$}: there exists a unimodal probability measure $\mu$ on $\mathbb{R}$, such that $\mu\boxplus S(0,1)$ is not unimodal.
\end{enumerate}
The first three statements have counterparts in classical probability, namely the statements hold true for $\mu \ast N(0,t)$ instead of $\mu \boxplus S(0,t)$. On the other hand, the fourth statement is not the case in classical probability, that is, $\mu \ast N(0,t)$ is unimodal for any unimodal probability measure $\mu$ and any $t>0$. 
Thus strong unimodality does not show complete similarity between the classical and free probability theories. 

Other similarities/dissimilarities on unimodality were found in the literature. Yamazato proved a remarkable property that every selfdecomposable distribution (in particular stable distributions) is unimodal (see \cite{Yam78}). More generally, Yamazato and Wolfe studied unimodality of infinitely divisible distributions (e.g.\ \cite{Wol78, Yam82}). In free probability, Biane firstly proved that every freely stable law is unimodal (see \cite{BP99}) and then Hasebe and Thorbj\o rnsen proved that every freely selfdecomposable distribution is unimodal (see \cite{HT}). This is a complete free analogue of Yamazato's result. However, Hasebe and Sakuma pointed out dissimilarities between classical and free probability theories regarding unimodality for general infinitely divisible distributions (see \cite{HS17}).

In this paper, we study unimodality for free multiplicative convolution with the free normal distributions on the unit circle. We write $\lambda_t$ $(t>0)$ for the free normal distribution on the unit circle which is the distribution of free unitary Brownian motion at time $t>0$ (see \cite{Bia97, Bia98}). Let $U_t\sim \lambda_t$ ($t>0$) and $V\sim \mu$ be freely independent unitary operators in a noncommutative probability space. Then we write $\mu\boxtimes \lambda_t$ for the distribution of $VU_t$ for $t>0$. The operation $\boxtimes$ is called {\it free multiplicative convolution}. It can be defined on the set of general probability measures on $\mathbb{T}$. In \cite{Z15}, Zhong proved that for an arbitrary probability measure $\mu$ on $\mathbb{T}$ the probability measure $\mu\boxtimes \lambda_t$ is Haar absolutely continuous and its probability density function is continuous on $\mathbb T$ and is analytic wherever it is positive. In this paper, we study the unimodality for $\mu\boxtimes \lambda_t$ by using Zhong's density formula and we conclude the following results:

\begin{theorem}\label{U1} {\it (See Theorems \ref{main-t}, \ref{main-t1}, \ref{main-t2} and \ref{main-t3} below.)}
\begin{enumerate}[(1)]

\item\label{item:unimodality} {\bf Unimodality for $\mu\boxtimes \lambda_t$}: let $\mu$ be a symmetric unimodal probability measure on $\mathbb{T}$. Then $\mu\boxtimes \lambda_t$ is symmetric unimodal for all $t>0$.

\item \label{item:eventual}{\bf Eventual unimodality for $\mu\boxtimes \lambda_t$}:  let $\mu$ be a symmetric distribution on $\mathbb{T}$ supported on $\{e^{i\theta}: \theta \in [-\varphi,\varphi]\}$ for some $\varphi \in (0,\pi/2)$. Then $\mu \boxtimes \lambda_t$ is unimodal for all $t\geq \frac{2(1+r_\varphi)}{1-r_\varphi}\log\frac{1}{r_\varphi}$, where $r_\varphi=(\cos\varphi)/4$.

\item\label{item:non-unimodality} {\bf Non-unimodality for $\mu\boxtimes \lambda_t$}: let $\Ber$ be the Bernoulli distribution $\frac{1}{2}(\delta_{-1}+\delta_1)$ on $\mathbb{T}$. Then the density of $\Ber \boxtimes \lambda_t$ attains a strict maximum at $\pm1$ and hence is not unimodal for any $t>0$.

\item {\bf Failure of freely strong unimodality of $\lambda_t$}: there exists some $t_0>0$ such that $\lambda_{t}$ is not freely strongly unimodal at any time $t \in (0,t_0)$.
\end{enumerate}
\end{theorem}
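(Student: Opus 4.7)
The goal is to exhibit, for every sufficiently small $t>0$, a unimodal probability measure $\mu$ on $\mathbb{T}$ (possibly depending on $t$) such that $\mu\boxtimes\lambda_t$ is not unimodal. By item (\ref{item:unimodality}), the witness must be genuinely asymmetric, so the construction has to escape the symmetric unimodal class.

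The first step is a reduction via Zhong's density formula for $\mu\boxtimes\lambda_t$, which expresses the density of $\mu\boxtimes\lambda_t$ through a subordination homeomorphism $\eta_t$ of $\mathbb{T}$ together with a boundary Poisson integral of $\mu$. Concretely, if $r_t\in(0,1)$ denotes the modulus associated with $\eta_t$, then up to a monotone change of variables and a positive Jacobian, non-unimodality of $\mu\boxtimes\lambda_t$ corresponds to non-unimodality of the classical multiplicative convolution $\mu\ast P_{r_t}$, where $P_{r_t}$ is the Poisson kernel of radius $r_t$ on $\mathbb{T}$ and $\ast$ denotes classical multiplicative convolution. Crucially, $r_t\to 1$ as $t\to 0^+$, so the small-$t$ regime corresponds to $r_t$ close to $1$, i.e., very concentrated Poisson smoothing.

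The heart of the argument is then a classical statement supplied by the paper's section on classical multiplicative Poisson convolution: for $r$ sufficiently close to $1$, the Poisson kernel $P_r$ is not strongly unimodal in the classical sense on $\mathbb{T}$, i.e., there exists an asymmetric unimodal measure $\mu$ whose convolution $\mu\ast P_r$ has two strict local maxima. The underlying obstruction, analogous to the defect of log-concavity in Ibragimov's theorem on $\mathbb{R}$, is that the density $P_r(\theta)=(1-r^2)/|1-re^{i\theta}|^2$ fails to be log-concave in $\theta$ near $\theta=0$ for every $r\in(0,1)$. Using this, one constructs for each such $r$ a concrete asymmetric unimodal $\mu$---for instance a two-step piecewise-constant density on a short arc of $\mathbb{T}$, or an asymmetric superposition of two narrow unimodal pieces centered at distinct points of $\mathbb{T}$---so that the Poisson smoothing $\mu\ast P_r$ carves a local dip between two peaks.

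Choosing $t_0>0$ small enough that $r_t$ lies in the corresponding left-neighborhood of $1$ for every $t\in(0,t_0)$, and pulling the non-unimodality of $\mu\ast P_{r_t}$ back through the monotone homeomorphism $\eta_t$, yields the desired non-unimodality of $\mu\boxtimes\lambda_t$ and completes the proof. The main obstacle will be the critical-point analysis that produces an explicit asymmetric unimodal $\mu$ together with a rigorous verification that $\mu\ast P_r$ has two strict local maxima for every $r$ in a full left-neighborhood of $1$; this amounts to a careful zero-counting calculation for the derivative of a harmonic Poisson integral. A secondary technical check is that Zhong's formula indeed delivers $r_t\to 1$ as $t\to 0^+$ with the asserted correspondence, and that the subordination $\eta_t$ preserves the local-maxima count on $\mathbb{T}$.
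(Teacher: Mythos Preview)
Your overall strategy—reduce the free question to the failure of classical strong unimodality of the Poisson kernel $\PK_r$ for $r$ near $1$—is the same as the paper's. But the reduction you state is incorrect, and this is a genuine gap.

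You claim there is a single ``modulus $r_t$ associated with $\eta_t$'' such that, up to a monotone reparametrization and a positive Jacobian, non-unimodality of $\mu\boxtimes\lambda_t$ is equivalent to non-unimodality of $\mu\circledast\PK_{r_t}$. This is false. The boundary of the subordination image $\Omega_{t,\mu}=\eta_t(\mathbb D)$ is the curve $\{v_{t,\mu}(\theta)e^{i\theta}:\theta\in[-\pi,\pi)\}$, and $v_{t,\mu}$ is \emph{not} constant in $\theta$; there is no single Poisson radius. Zhong's formula reads $p_t(\overline{\Psi_{t,\mu}(e^{i\theta})})=-\tfrac{1}{\pi t}\log v_{t,\mu}(\theta)$, where $v_{t,\mu}(\theta)$ solves an implicit equation involving the Poisson integral of $\mu$ at radius $r=v_{t,\mu}(\theta)$. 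The density of $\mu\boxtimes\lambda_t$ is not a reparametrized Poisson convolution of $\mu$.

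The correct link is through \emph{level sets} (the paper's Lemma~\ref{lem:unimodal}): $\mu\boxtimes\lambda_t$ is unimodal if and only if, for every $r\in(0,1)$, the equation
\[
\frac{d(\PK_{r,0}\circledast\mu)}{d\theta}(\theta)=\frac{-\log r}{\pi t}
\]
has at most two solutions $\theta$. Hence, to disprove unimodality for a given $t$, it is not enough that $\PK_r\circledast\mu$ be non-unimodal for some $r$ close to $1$; you must find an $r$ at which a three-point level set occurs \emph{at the specific height} $(-\log r)/(\pi t)$. The paper handles this by constructing, alongside the witness $\mu$, a continuous selector $h\colon[r_0,1)\to(0,\infty)$ with $\limsup_{r\to1}h(r)\in(0,\infty]$ such that the level $h(r)$ is attained at least three times by the density of $\PK_{r,0}\circledast\mu$, and then solves $h(r)=(-\log r)/(\pi t)$ by the intermediate value theorem. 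Your proposal omits this matching step; without it the argument does not close, since as $r\to1$ the right-hand side tends to $0$ while the ``bad'' level in the paper's example tends to a positive constant, so the existence of a suitable $r$ for each small $t$ genuinely requires the continuity and endpoint behaviour of $h$.
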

The above \eqref{item:unimodality} strengthens \cite[Corollary 3.13]{Z15}, where the statement is proved for $\mu =\delta_1$. The non-unimodality result \eqref{item:non-unimodality} contrasts with the additive case; in the latter case the assumption of compact support of the initial distribution implied the eventual unimodality. We do not know whether the failure of freely strong unimodality can be proved for all $t>0$. 

We also prove similar statements for classical multiplicative convolution $\circledast$ on the unit circle, in particular convolution with the Poisson kernel $\PK_{r,\psi}$ with the density 
\begin{equation}\label{eq:PK}
\frac{d\PK_{r,\psi}}{d\theta}(\theta) =\frac{1-r^2}{2\pi |1-re^{i(\theta-\psi)}|^2}, \qquad \theta\in[-\pi,\pi), 
\end{equation}
where $r\in [0,1)$ and $\psi\in\mathbb{R}$. 

\begin{theorem}\label{U2} {\it (See Theorems \ref{thm:classical_symmetric_unimodal}, \ref{thm:eventual unimodality of PK}, \ref{thm:b circledast PK} and \ref{s-unim} below.)} 
\begin{enumerate}[(1)]
\item \label{item:symmetric}
Let $\mu$ and $\nu$ be symmetric unimodal probability measures on $\mathbb T$. Then so is $\mu \circledast \nu$. 
\item {\bf Eventual unimodality for $\PK_{r,\psi}\circledast \mu$}: let $\mu$ be a symmetric distribution on $\mathbb{T}$ supported on $\{e^{i\theta}: \theta \in [-\varphi,\varphi]\}$ for some $\varphi \in (0,\pi/2)$. Then $\PK_{r,\psi} \circledast \mu$ is unimodal for all $r\in (0,r_\varphi)$, where $r_\varphi=(\cos\varphi)/4$.

\item {\bf Non-unimdality for $\PK_{r,\psi}\circledast \mu$}: the measure $\PK_{r,\psi}\circledast\Ber$ attains a strict maximum at $\pm1 \in \mathbb T$ and hence is not unimodal at any $r\in(0,1)$.
\item \label{item:strongly unimodal} {\bf Failure of strong unimodality of $\PK_{r,\psi}$}: 
there exists some $r_0\in(0,1)$ such that $\PK_{r,\psi}$ is not strongly unimodal for any $r\in(r_0,1)$ and $\psi \in\R$. 
\end{enumerate}
\end{theorem}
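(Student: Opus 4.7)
By rotation invariance, I would first reduce to the case $\psi=0$. The goal then is to exhibit, for each $r$ sufficiently close to $1$, a unimodal probability measure $\mu$ on $\mathbb{T}$ such that the (continuous) density of $\PK_{r,0}\circledast\mu$ has at least two distinct local maxima.

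The guiding heuristic is that the Poisson kernel fails to be log-concave on $\mathbb{T}$: a direct computation yields
\[
\frac{d^2}{d\theta^2}\log\PK_{r,0}(\theta)=\frac{2r\bigl(2r-(1+r^2)\cos\theta\bigr)}{(1-2r\cos\theta+r^2)^2},
\]
which is strictly positive on the arc $\{\theta:\cos\theta<2r/(1+r^2)\}$, and in particular on a neighborhood of $\pi$. By Ibragimov's classical theorem on $\mathbb{R}$, such a failure of log-concavity is exactly what obstructs strong unimodality; moreover, for $r$ close to $1$ the Poisson kernel on $\mathbb{T}$ locally resembles a Cauchy density of scale $\sim(1-r)$ near $\theta=0$, itself a textbook example of a non-strongly unimodal distribution. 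One would like to turn this heuristic into an actual counterexample on $\mathbb{T}$.

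Guided by this picture, I would look for $\mu$ as a mixture of the form $(1-\epsilon)\nu_0+\epsilon\nu_\pi$, where $\nu_0$ is a sharply peaked unimodal density at $\theta=0$ and $\nu_\pi$ is a milder unimodal density whose mass lies in the log-convex region of $\PK_{r,0}$ near $\pi$, with parameters tuned so that $\mu$ itself is unimodal in the sense used in the paper. Using either the Fourier description $\widehat{\PK}_{r,0}(n)=r^{|n|}$ or the explicit density formula \eqref{eq:PK}, I would then compute the density of $\PK_{r,0}\circledast\mu$, differentiate in $\theta$, and analyze its critical points. The aim is to show that for $r$ close to $1$ a second local maximum appears near $\pi$ alongside the local maximum near $0$ inherited from $\nu_0$, so that the convolution has (at least) two distinct modes; the quantitative condition for both local maxima to coexist would single out the threshold~$r_0$.

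The main obstacle is that convolution with $\PK_{r,0}$ acts on the Fourier side as the multiplier $r^{|n|}$ and hence behaves as a low-pass filter, which typically preserves or even enhances unimodality: indeed, for many simple one-parameter families (mixtures of two Poisson kernels, trigonometric polynomials $1+\sum_{n}a_n\cos(n\theta)$, etc.) a short computation shows that any unimodality condition is \emph{improved} after applying the multiplier $r^{|n|}$. Consequently, $\mu$ must be tuned delicately: its mass near $\pi$ should be large enough to produce a second local maximum after the Poisson smoothing, yet small enough that $\mu$ itself remains unimodal. Verifying that this balance can be maintained uniformly for all $r$ in some interval $(r_0,1)$, together with extracting an explicit value of $r_0$, is the hard part and will require a careful case analysis of the derivatives of the convolved density via \eqref{eq:PK}.
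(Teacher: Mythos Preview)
Your proposal addresses only part~(4) and, as you yourself acknowledge in the last paragraph, stops short of a proof: the ``hard part'' of verifying that the putative second local maximum actually survives for all $r$ in some interval $(r_0,1)$ is left entirely open. So what you have is a heuristic, not an argument.

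There is also a concrete obstruction to the strategy as written. Your proposed counterexample $\mu=(1-\epsilon)\nu_0+\epsilon\nu_\pi$, with $\nu_0$ peaked at~$0$ and $\nu_\pi$ peaked at~$\pi$, is most naturally read as a \emph{symmetric} measure. But part~(1) of the very theorem you are proving says that $\PK_{r,0}\circledast\mu$ is unimodal whenever $\mu$ is symmetric unimodal. Hence any symmetric choice of $\mu$ is doomed from the start, and the log-convexity heuristic near $\pi$ cannot by itself produce a counterexample. The asymmetry of $\mu$ is not a technicality but the essential feature, and your sketch does not identify it.

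The paper's construction is quite different in mechanism. It takes the \emph{asymmetric} unimodal measure
\[
\mu_a = a\,\delta_0 + b\,\mathbf{1}_{(0,\pi)}(\theta)\,d\theta, \qquad a+b\pi=1,\quad a \text{ small},
\]
a point mass at $0$ sitting on top of a one-sided uniform shelf. After computing $f_r'(\theta)$ explicitly as a rational expression with numerator $g_r(\theta)$, the paper sets $r=1$ to obtain a tractable trigonometric equation; for $a^2/(4b^2)<1/27$ this $g_1$ has three simple zeros $0<\alpha_1<\beta_1$ lying in $(0,\pi/2)$, and the argument principle then propagates these zeros to $g_r$ for $r$ close to~$1$. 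The two local maxima of $f_r$ are at $\gamma_r$ near $0$ and at $\beta_r\in(\pi/3,\pi/2)$ --- not near $\pi$ at all. So both the shape of the counterexample and the location of the extra mode differ from what your heuristic predicts.
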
 
In fact, some statements of Theorem \ref{U1} are closely related to or directly applied to Theorem \ref{U2}, 
 as well as they are new in classical probability to the authors' knowledge. The reason why classical convolution with Poisson kernels is related to free convolution with free normal distributions can be understood via Lemma \ref{lem:unimodal} and Lemma \ref{lem:derivative_v}. 

After introducing basic known results on free probability in Section \ref{sec2}, 
we introduce and investigate the classes of symmetric and unimodal probability measures on $\mathbb{T}$ in Sections \ref{sec3} and \ref{sec4}, respectively. The above main results are proved in Section \ref{sec5}.

%%%%%%%%%%%%%

\section{Preliminaries in free probability theory}\label{sec2}

\subsection{$\Sigma$-transform and free multiplicative convolution}\label{subsec1}
Throughout the paper we identify the unit circle $\mathbb T$ with $\mathbb R / 2\pi \mathbb Z$ (often denoted as $[-\pi,\pi)$) and use the notation $\mu(d\theta)$ as well as $\mu(d\xi)$, where $\xi=e^{i\theta}$, for probability measures $\mu$ on $\mathbb T.$ Moreover, we identify the density functions of a Haar absolutely continuous distributions on $\mathbb{T}$ with functions on $\mathbb{R}$ with period $2\pi$.

Free multiplicative convolution  $\boxtimes$  is a binary operation on the set of probability measures on $\mathbb{T}$. It was  introduced in \cite{V87} as the distribution of the multiplication of free unitary random variables on a noncommutative probability space. We briefly review on how to compute it, following \cite{BV92}. 

For a probability measure $\mu$ on $\mathbb{T}$, we define the analytic functions 
\begin{equation}
\psi_\mu(z):=\int_\mathbb{T} \frac{\xi z}{1-\xi z}\mu(d\xi) \text{~~~and~~~} \eta_\mu(z):=\frac{\psi_\mu(z)}{1+\psi_\mu(z)}, \qquad z\in \mathbb{D}.
\end{equation}
The function $\eta_\mu$ is called the {\it $\eta$-transform of $\mu$}. It is easy to check that $\eta_\mu(0)=0$ and $\eta_\mu'(0)=\int_\mathbb{T} \xi \,d\mu(\xi)$. If the first moment of $\mu$ is nonzero then there exists a function $\eta_\mu^{-1}(z)$ which is analytic in a neighborhood of zero, such that
\begin{equation}
\eta_\mu(\eta_\mu^{-1}(z))=\eta_\mu^{-1}(\eta_\mu(z))=z,
\end{equation}
for $z$ in a neighborhood of zero. Then we define the {\it $\Sigma$-transform of $\mu$}
\begin{equation}
\Sigma_\mu(z):=\frac{\eta_\mu^{-1}(z)}{z}  
\end{equation}
in the neighborhood of zero where $\eta_\mu^{-1}(z)$ is defined. 
For any probability measures $\mu, \nu$ on $\mathbb{T}$ with nonzero first moments, we can find a unique probability measure $\tau$ on $\mathbb{T}$ with a nonzero first moment such that 
\begin{equation}
\Sigma_{\tau}(z)=\Sigma_\mu(z)\Sigma_\nu(z),
\end{equation}
for $z$ in a neighborhood of zero where the three functions are defined. We call the probability measure $\tau$ the {\it free multiplicative convolution} of $\mu$ and $\nu$, and it is denoted by $\mu\boxtimes \nu$. The $\Sigma$-transform is related to the {\it S-transform} via 
\begin{equation}
S_\mu(z)=\Sigma_\mu\left(\frac{z}{1+z}\right)=\frac{1+z}{z}\psi_\mu^{-1}(z).
\end{equation}
For all probability measures $\mu,\nu$ on $\mathbb{T}$ with nonzero first moments, there exists an analytic map $\eta:\mathbb{D}\rightarrow \mathbb{D}$ such that $\eta(0)=0$ and $\eta_{\mu\boxtimes \nu}(z)=\eta_\mu(\eta(z))$ holds for all $z\in\mathbb{D}$. The function $\eta$ is called the {\it subordination function of $\eta_{\mu\boxtimes\nu}$ with respect to $\eta_\mu$}. 

If both $\mu$ and $\nu$ have the zero first moment, the definition of free independence implies that $\mu\boxtimes \nu={\bf h}$, where the measure ${\bf h}$ is the normalized Haar measure.

\subsection{%$\boxtimes$-infinitely divisible on $\mathbb{T}$ and 
Free normal distributions on the unit circle} 
A probability measure $\mu$ on $\mathbb{T}$ is said to be {\it $\boxtimes$-infinitely divisible} if for each $n\in\mathbb{N}$ there exists a probability measure $\mu$ on $\mathbb{T}$, such that
\begin{equation}
\mu=\overbrace{\mu_n\boxtimes \cdots \boxtimes \mu_n}^{n \text{ times}}.
\end{equation}
In \cite[Theorem 6.7]{BV92}, $\mu$ is $\boxtimes$-infinitely divisible on $\mathbb{T}$ if and only if there exists a function $u(z)$ which is analytic in $\mathbb{D}$ such that $\Re u(z)\ge0$ for $z\in\mathbb{D}$ and its $\Sigma$-transform is expressed by
\begin{equation}\label{S}
\Sigma_\mu(z)=\exp [u(z)].
\end{equation}
Suppose that $\mu$ is $\boxtimes$-infinitely divisible on $\mathbb{T}$ and its $\Sigma$-transform is expressed by \eqref{S}. Then there exists a $\boxtimes$-semigroup $\{\mu_t\}_{t\ge0}$ of probability measures on $\mathbb{T}$ such that
\begin{equation}
\Sigma_{\mu_t}(z)=\exp [tu(z)], \qquad t\geq0, 
\end{equation}
in particular $\mu_0=\delta_1$. 
For $t>0$, we define $\lambda_t$ as the $\boxtimes$-infinitely divisible probability measure on $\mathbb{T}$ whose $\Sigma$-transform is given by 
\begin{equation}
\Sigma_{\lambda_t}(z)=\exp \left[\frac{t}{2}\cdot \frac{1+z}{1-z} \right], \qquad t>0.
\end{equation}
We call $\lambda_t$ the {\it free normal distribution on the unit circle}. This is the distribution of {\it free unitary Brownian motion} which was introduced by Biane (e.g, see \cite{Bia97,Bia97(2)}). The measure $\lambda_t$ has the first moment $e^{-t}$. 
\subsection{Zhong's density formula}
Most of the materials in this section are based on Zhong's papers \cite{Z14,Z15}.  As mentioned in Section \ref{subsec1}, if $\mu$ and $\nu$ have nonzero first moments, then we can find a subordination function of $\eta_{\mu\boxtimes\nu}$ with respect to $\eta_\mu$ (or $\eta_\nu$). However, if $\mu$ has the zero first moment (and $\nu$ has a nonzero first moment), then the subordination function of $\eta_{\mu\boxtimes \nu}$ with respect to $\eta_\mu$ is generally not unique (see \cite[Example 3.5]{Z14}). If we require a subordination function to satisfy additional properties, then it is unique even if $\mu$ has the zero first moment \cite{BB07,Bia98}. Namely, if $\mu\neq {\bf h}$ is a probability measure on $\mathbb{T}$ and $\nu$ has a nonzero first moment, then there exists a unique pair of analytic functions $w_1,w_2:\mathbb{D}\rightarrow \mathbb{D}$ such that
\begin{itemize}
\item  $w_1(0)=w_2(0)=0$;
\item  $\eta_{\mu\boxtimes \nu }(z)=\eta_\mu(w_1(z))=\eta_\nu(w_2(z))$ for all $z\in\mathbb{D}$;
\item $w_1(z)w_2(z)=z\eta_{\mu\boxtimes \nu}(z)$ for all $z\in\mathbb{D}$.
\end{itemize}
By the above result, for a probability measure $\mu\neq{\bf h}$ on $\mathbb{T}$ and $t>0$, we can find a unique pair of  subordination functions $\eta_t,\zeta_t:\mathbb{D}\rightarrow \mathbb{D}$ of $\eta_{\mu\boxtimes \lambda_t}$ with respect to $\eta_{\mu}$ and $\eta_{\lambda_t}$, respectively such that
\begin{itemize}
\item $\eta_t(0)=\zeta_t(0)=0$,
\item $\eta_{\mu\boxtimes \lambda_t}(z)=\eta_\mu(\eta_t(z))=\eta_{\lambda_t}(\zeta_t(z))$,
\item $\eta_t(z)\zeta_t(z)=z\eta_{\mu\boxtimes \lambda_t}(z)$,
\end{itemize}
for all $z\in \mathbb{D}$. According to \cite[Proposition 3.2]{BB05}, for each $t>0$ there exists a probability measure $\rho_t$ on $\mathbb{T}$, such that $\eta_t(z)=\eta_{\rho_t}(z)$. In \cite[Lemma 3.4 and Corollary 3.13]{Z14}, the measures $\rho_t$ are $\boxtimes$-infinitely divisible on $\mathbb{T}$ and its $\Sigma$-transform is expressed by
\begin{equation}
\Sigma_{\rho_t}(z)=\Sigma_{\lambda_t}(\eta_\mu(z))=\exp \left[ \frac{t}{2} \int_\mathbb{T} \frac{1+\xi z}{1-\xi z} \mu(d\xi) \right].
\end{equation}
It is proved in \cite{BB05} that $\eta_t(z)=\eta_{\rho_t}(z)$ is a conformal map of $\mathbb D$ onto a simply connected domain $\Omega_{t,\mu} \subset \mathbb D$ and extends to a homeomorphism from $\text{cl}(\mathbb D)$ to $\text{cl}(\Omega_{t,\mu})$, where $\text{cl}$ means the closure operation. To study the measure $\mu\boxtimes \lambda_t$, it is important to describe the domain $\Omega_{t,\mu}$. 

According to Zhong's paper (see \cite{Z15}), we define the open set (of $\mathbb T$)
\begin{equation}
U_{t,\mu}:=\left\{\theta\in [-\pi,\pi): \int_{-\pi}^\pi \frac{1}{|1-e^{i (\theta+x)}|^2}\mu(dx)>\frac{1}{t} \right\},
\end{equation}
and also define a function $v_{t,\mu}:[-\pi,\pi)\rightarrow (0,1]$ as
\begin{equation}
v_{t,\mu}(\theta):=\sup \left\{ r\in (0,1) : \frac{1-r^2}{-2 \log r}\int_{-\pi}^\pi \frac{1}{|1-re^{i(\theta+x)}|^2}\mu(dx)<\frac{1}{t} \right\}.
\end{equation}
Note that the function $v_{t,\mu}$ is continuous on $[-\pi,\pi)$, analytic in $U_{t,\mu}$ and one has $U_{t,\mu}=\{ \theta\in [-\pi,\pi) : v_{t,\mu}(\theta)\neq 1\}$ by the proof of \cite[Corollary 3.9]{Z15}. In particular we have that $v_{t,\mu}(\theta)=v_{t,\mu}(-\theta)$ for all $\theta\in[-\pi,\pi)$ if $\mu$ is symmetric. For any $\theta\in U_{t,\mu}$, the value $v_{t,\mu}(\theta)$ is a unique solution $r \in (0,1)$ of the following equation:
\begin{equation}\label{eq:v}
\frac{1-r^2}{-2\log r}\int_{-\pi}^\pi \frac{1}{|1-re^{i(\theta+x)}|^2}\mu(dx)=\frac{1}{t}.
\end{equation}

In \cite[Theorem 3.2]{Z15}, it is proved that $\Omega_{t,\mu}=\{re^{i\theta}: 0 \leq r < v_{t,\mu}(\theta), \theta \in [-\pi,\pi)\}$ and  
$\partial \Omega_{t,\mu}\cap \mathbb{D} =\{v_{t,\mu}(\theta)e^{i\theta}:\theta \in U_{t,\mu}\}$. 
As the inverse map of $\eta_t$, the restriction of the map $\Phi_{t,\mu}(z) := z \Sigma_{\rho_t}(z)$ to $\Omega_{t,\mu}$ is conformal onto $\mathbb D$, and it extends to a homeomorphism of $\text{cl}(\Omega_{t,\mu})$ onto $\text{cl}(\mathbb D)$. We define a homeomorphism $\Psi_{t,\mu}:\mathbb{T}\rightarrow \mathbb{T}$ by
\begin{equation}
\Psi_{t,\mu}(e^{i\theta}):=\Phi_{t,\mu}(v_{t,\mu}(\theta)e^{i\theta}), \qquad \theta\in [-\pi,\pi).
\end{equation}
Since $|\Psi_{t,\mu}(e^{i\theta})|=1$ and $\Phi_{t,\mu}(z)= z \Sigma_{\rho_t}(z)$, we have 
\begin{equation}
\arg (\Psi_{t,\mu}(e^{i\theta}))=\theta+t\int_{-\pi}^\pi \frac{v_{t,\mu}(\theta)\sin(\theta+x)}{|1-v_{t,\mu}(\theta)e^{i(\theta+x)}|^2}\mu(dx),
\end{equation}
and hence 
\begin{equation}
\Psi_{t,\mu}(e^{i\theta})=\exp \left[i\left(\theta+t\int_{-\pi}^\pi \frac{v_{t,\mu}(\theta)\sin(\theta+x)}{|1-v_{t,\mu}(\theta)e^{i(\theta+x)}|^2}\mu(dx) \right)\right].
\end{equation}
By \cite[Proposition 3.6, Theorem 3.8]{Z15}, the probability measure $\mu\boxtimes \lambda_t$ is Haar absolutely continuous and its probability density function $p_t$ is given by
\begin{equation}\label{eq:density}
p_t(\overline{\Psi_{t,\mu}(e^{i\theta})})=-\frac{\log(v_{t,\mu}(\theta))}{\pi t},
\end{equation}
and it is analytic wherever it is positive. Moreover we have that 
\begin{equation}
\supp(\mu\boxtimes\lambda_t)=\{\overline{\Psi_{t,\mu}(e^{i\theta})}: \theta\in \text{cl}(U_{t,\mu})\}. 
\end{equation}

 \subsection{Free multiplicative convolution with Poisson kernels}
There is no useful formula for describing the absolutely continuous part of free multiplicative convolution of general probability measures. One special case is the free multiplicative convolution with the free normal distribution, which has some implicit formula for the density as already mentioned. 
Another computable case is the free multiplicative convolution with the Poisson kernel. 

\begin{proposition}\label{prop:Multiplicative with Poisson}
For every probability measure $\mu$, every $r \in [0,1)$ and every $\psi \in \R$ we have
\begin{equation}
\PK_{r,\psi} \circledast \mu = \PK_{r,\psi} \boxtimes \mu. 
\end{equation}
\end{proposition}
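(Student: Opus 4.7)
My plan is to compute the $\eta$-transforms of both sides of the claimed identity and show they agree; since the $\eta$-transform determines a probability measure on $\mathbb{T}$ uniquely, this suffices. I first establish the simple identity $\eta_{\PK_{r,\psi}}(z)=re^{i\psi}z$: expanding the Poisson kernel \eqref{eq:PK} as a Fourier series yields $m_n(\PK_{r,\psi})=r^n e^{in\psi}$ for $n\ge 0$, so
$$\psi_{\PK_{r,\psi}}(z)=\sum_{n\ge 1}(re^{i\psi}z)^n=\frac{re^{i\psi}z}{1-re^{i\psi}z},$$
and the formula for $\eta_{\PK_{r,\psi}}$ follows upon computing $\psi/(1+\psi)$. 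The degenerate cases $r=0$ or $\mu=\mathbf{h}$ reduce both sides of the claim to $\mathbf{h}$ by absorption, so I assume henceforth $r\in(0,1)$ and $\mu\neq\mathbf{h}$.

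For the classical side, multiplicative convolution on $\mathbb{T}$ multiplies moments, giving $m_n(\PK_{r,\psi}\circledast\mu)=r^n e^{in\psi}m_n(\mu)$ for every $n\ge 1$. Summing the resulting power series produces $\psi_{\PK_{r,\psi}\circledast\mu}(z)=\psi_\mu(re^{i\psi}z)$, and therefore
$$\eta_{\PK_{r,\psi}\circledast\mu}(z)=\eta_\mu(re^{i\psi}z).$$

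For the free side, since $\mu\neq\mathbf{h}$ and $\PK_{r,\psi}$ has nonzero first moment $re^{i\psi}$, the subordination result recalled in Section \ref{sec2} applies to $\mu\boxtimes\PK_{r,\psi}$. Guided by the explicit form of $\eta_{\PK_{r,\psi}}$, I propose the candidate subordination pair $w_1(z):=re^{i\psi}z$ and $w_2(z):=\eta_\mu(re^{i\psi}z)/(re^{i\psi})$. Both vanish at $0$ and are analytic on $\mathbb{D}$, and the Schwarz lemma applied to $\eta_\mu$ gives $|w_2(z)|\le |z|<1$, so $w_2$ also maps $\mathbb{D}$ into itself. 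Direct substitution verifies $\eta_\mu(w_1(z))=\eta_{\PK_{r,\psi}}(w_2(z))=\eta_\mu(re^{i\psi}z)$ together with $w_1(z)w_2(z)=z\eta_\mu(re^{i\psi}z)$, so by uniqueness of the subordination pair,
$$\eta_{\mu\boxtimes\PK_{r,\psi}}(z)=\eta_\mu(re^{i\psi}z).$$
Comparing with the classical-side formula and invoking commutativity of $\boxtimes$ yields $\PK_{r,\psi}\circledast\mu=\PK_{r,\psi}\boxtimes\mu$. The only slightly non-routine step is spotting the explicit subordination pair; once written down, the remaining verifications are algebraic and the Schwarz bound is the only analytic input required.
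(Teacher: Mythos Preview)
Your proof is correct. The paper itself does not give an argument for this proposition; it simply refers the reader to \cite[Section~7.1]{FHS18}. Your route---computing $\eta_{\PK_{r,\psi}}(z)=re^{i\psi}z$ from the moments, observing that classical convolution rescales the argument of $\eta_\mu$, and then exhibiting the explicit subordination pair $(w_1,w_2)=(re^{i\psi}z,\ \eta_\mu(re^{i\psi}z)/(re^{i\psi}))$ to show the free convolution does the same---is the standard one and is in the spirit of the cited reference.

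One minor remark on presentation: the uniqueness you invoke is formally a shade stronger than the literal phrasing recalled in Section~\ref{sec2}, since you verify the system $\eta_\mu(w_1)=\eta_\nu(w_2)$ and $w_1w_2=z\,\eta_\mu(w_1)$ without knowing in advance that the common value equals $\eta_{\mu\boxtimes\nu}$. This stronger uniqueness (for pairs solving the coupled system, from which $\eta_{\mu\boxtimes\nu}$ is then read off) is exactly what is proved in \cite{BB07}, so the argument stands. Alternatively, when $\mu$ has nonzero first moment you can bypass subordination entirely: since $\Sigma_{\PK_{r,\psi}}\equiv (re^{i\psi})^{-1}$ is constant, multiplicativity of $\Sigma$ gives $\eta_{\mu\boxtimes\PK_{r,\psi}}^{-1}=(re^{i\psi})^{-1}\eta_\mu^{-1}$ and hence the same conclusion.
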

 A proof can be found e.g.\ in \cite[Section 7.1]{FHS18}. 
In Sections \ref{sec5.2} -- \ref{sec5.3}, we will discuss unimodality for classical multiplicative convolution (and hence free multiplicative convolution) with Poisson kernels.

\section{Symmetric probability measures on the unit circle}\label{sec3}
%%%%%%%%%%%%%

In this section, we define and characterize the symmetric probability measures on $\mathbb{T}$.

\begin{definition}
A probability measure $\mu$ on $\mathbb{T}$ is said to be {\it symmetric} if $\mu(B)=\mu(\overline{B})$ for all Borel sets $B$ in $\mathbb{T}$, where $\overline{B}:=\{\overline{\xi}: \xi\in B\}$.
\end{definition}

We show that the free multiplicative convolution of two symmetric probability measures on $\mathbb{T}$ is also symmetric. To prove this we characterize the class of symmetric probability measures on $\mathbb{T}$. 

\begin{proposition}\label{Symm}
For a probability measure $\mu$ on $\mathbb{T}$, the following statements are equivalent. 
\begin{enumerate}[(1)]
\item\label{item:symmetric1} $\mu$ is symmetric.
\item\label{item:symmetric2} $\psi_\mu(\overline{z})=\overline{\psi_\mu(z)}$ for $z\in\mathbb{D}$.
\item\label{item:symmetric3} All moments of $\mu$ are real. 
\end{enumerate}
Moreover, if $\mu$ has a non-zero mean, then the above conditions are also equivalent to 
\begin{enumerate}[(1)]\setcounter{enumi}{3}
\item\label{item:symmetric4} $\Sigma_\mu(\overline{z})=\overline{\Sigma_\mu(z)}$ on a neighborhood of $0$. 
\end{enumerate}
\end{proposition}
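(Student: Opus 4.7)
The plan is to establish the cycle $(1) \Rightarrow (3) \Rightarrow (2) \Rightarrow (1)$ first, and then to prove $(2) \Leftrightarrow (4)$ under the additional assumption $m_1 := \int_{\mathbb T}\xi\,\mu(d\xi) \neq 0$. The guiding elementary fact throughout is that an analytic function $f(z)=\sum_{n\geq 0}a_n z^n$ on $\mathbb D$ satisfies $f(\overline z)=\overline{f(z)}$ if and only if every Taylor coefficient $a_n$ is real; this follows by comparing power series at $0$.

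For $(1)\Rightarrow (3)$, the $n$-th moment $m_n=\int_{\mathbb T}\xi^{n}\mu(d\xi)$ satisfies $m_n=\int_{\mathbb T}\overline\xi^{\,n}\mu(d\xi)=\overline{m_n}$ by a change of variables $\xi \mapsto \overline \xi$, so $m_n\in\mathbb R$. Since $\psi_\mu(z)=\sum_{n\geq 1}m_n z^n$, the equivalence $(3)\Leftrightarrow (2)$ is immediate from the elementary fact above. For $(3)\Rightarrow (1)$, I would invoke the standard uniqueness-of-moments argument on $\mathbb T$: trigonometric polynomials span a dense subspace of $C(\mathbb T)$ by Stone–Weierstrass, so any probability measure on $\mathbb T$ is determined by the family $\{m_n\}_{n\in\mathbb Z}$ (where $m_{-n}=\overline{m_n}$ holds automatically since $\overline\xi=\xi^{-1}$ on $\mathbb T$). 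Denoting by $\mu^\ast$ the pushforward of $\mu$ by $\xi\mapsto\overline\xi$, one computes $m_n(\mu^\ast)=\overline{m_n(\mu)}$, which equals $m_n(\mu)$ by (3); hence $\mu=\mu^\ast$, giving (1).

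For the equivalence $(2)\Leftrightarrow (4)$ under $m_1\neq 0$, the key step is to transport the conjugation symmetry through the inversion $\eta_\mu\mapsto \eta_\mu^{-1}$. Assuming (2), one gets $\eta_\mu(\overline z)=\overline{\eta_\mu(z)}$ on $\mathbb D$ from the formula $\eta_\mu=\psi_\mu/(1+\psi_\mu)$. Since $\eta_\mu'(0)=m_1\neq 0$, $\eta_\mu$ is biholomorphic near $0$; applying complex conjugation to the defining equation $\eta_\mu(\eta_\mu^{-1}(z))=z$ and using the symmetry of $\eta_\mu$ yields $\eta_\mu^{-1}(\overline z)=\overline{\eta_\mu^{-1}(z)}$ on a neighborhood of $0$, whence (4) follows upon dividing by $z$. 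Conversely, if (4) holds, the same manipulation applied to $\Sigma_\mu(\overline z)=\overline{\Sigma_\mu(z)}$ gives $\eta_\mu^{-1}(\overline z)=\overline{\eta_\mu^{-1}(z)}$, and composing with $\eta_\mu$ on both sides (after observing that this symmetry must then propagate to $\eta_\mu$ itself near $0$ by taking the inverse again) yields $\eta_\mu(\overline z)=\overline{\eta_\mu(z)}$ locally; analytic continuation extends this to all of $\mathbb D$, and then $\psi_\mu=\eta_\mu/(1-\eta_\mu)$ gives (2).

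The proof is largely mechanical; the only points requiring a little care are the appeal to moment determinacy on $\mathbb T$ (immediate from Stone–Weierstrass) and the careful handling of the local inverse $\eta_\mu^{-1}$ in the derivation of (4), since $\Sigma_\mu$ is only defined in a neighborhood of $0$. No substantive obstacle is anticipated.
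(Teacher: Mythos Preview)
Your proof is correct. The paper follows a slightly different route: it proves $(1)\Rightarrow(2)$ directly from the integral definition of $\psi_\mu$ (splitting the domain into $(0,\pi)$, $\{1\}$, $\{-1\}$ and pairing $\xi$ with $\overline\xi$), and then deduces $(2)\Rightarrow(1)$ by invoking the Herglotz-type inversion formula for $\mu$ in terms of boundary values of $\Re\psi_\mu$ (citing \cite{K59}); the remaining equivalences $(2)\Leftrightarrow(3)\Leftrightarrow(4)$ are dismissed as ``basic complex analysis.'' Your argument instead closes the loop via $(3)\Rightarrow(1)$ using moment determinacy on $\mathbb T$ (Stone--Weierstrass), which is more self-contained since it avoids the external inversion formula, and your treatment of $(2)\Leftrightarrow(4)$ spells out explicitly what the paper leaves implicit. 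Both approaches are standard; yours trades a measure-theoretic inversion result for a density argument in $C(\mathbb T)$, which some readers may find cleaner.
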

\begin{proof}
\eqref{item:symmetric1}$\Rightarrow$\eqref{item:symmetric2}: for a symmetric probability measure $\mu$ on $\mathbb{T}$ we have
\begin{equation*}
\begin{split}
\psi_\mu(z)&=\int_{-\pi}^\pi \frac{\xi z}{1-\xi z}\mu(d\xi)=\int_{(0,\pi)} \left(\frac{\overline{\xi} z}{1-\overline{\xi}z}+\frac{\xi z}{1-\xi z}\right)\mu(d\xi) + \frac{z}{1-z} \mu(\{1\}) + \frac{-z}{1+z} \mu(\{-1\}), 
\end{split}
\end{equation*}
and therefore we have $\psi_\mu(\overline{z})=\overline{\psi_\mu(z)}.$

\eqref{item:symmetric2}$\Rightarrow$\eqref{item:symmetric1}: condition \eqref{item:symmetric2} implies that $\Re\psi_\mu(z) = \Re \psi_\mu(\overline{z})$. The inversion formula \cite[Theorem 2]{K59} shows that 
$$
\mu(A) = \lim_{r\uparrow1}\int_A (2 \Re(\psi_\mu(r\overline{\xi}))+1) \,{\bf h}(d\xi) =\lim_{r\uparrow1}\int_A (2 \Re(\psi_\mu(r\xi))+1) \,{\bf h}(d\xi) =\mu(\overline{A})
$$
for any arc $A \subset \mathbb T$ such that its endpoints are continuity points of $\mu$. Therefore $\mu(B) = \mu(\overline{B})$ holds for all Borel subsets $B$ of $\mathbb T$. 

Conditions \eqref{item:symmetric2}, \eqref{item:symmetric3} and \eqref{item:symmetric4} are equivalent by basic complex analysis. 
\end{proof}

\begin{corollary}\label{Ssym}
If $\mu$ and $\nu$ are symmetric probability measures on $\mathbb{T}$, then $\mu\boxtimes \nu$ and $\mu \circledast \nu$ are also symmetric.
\end{corollary}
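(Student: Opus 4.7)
The plan is to verify symmetry of the two resulting measures by applying Proposition~\ref{Symm}.

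For the classical multiplicative convolution, I would realize $\mu\circledast\nu$ as the distribution of the product $\xi\eta$ where $\xi\sim\mu$ and $\eta\sim\nu$ are independent $\mathbb{T}$-valued random variables. By Proposition~\ref{Symm}\eqref{item:symmetric1}, symmetry of $\mu$ and $\nu$ means $\bar\xi\sim\mu$ and $\bar\eta\sim\nu$, and independence is preserved under complex conjugation. Hence $\overline{\xi\eta}=\bar\xi\,\bar\eta$ is again distributed as $\mu\circledast\nu$, giving the desired symmetry. (Equivalently, the moments factor as $m_n(\mu\circledast\nu)=m_n(\mu)\,m_n(\nu)\in\mathbb{R}$, so one may instead appeal to condition~\eqref{item:symmetric3}.)

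For the free multiplicative convolution, my strategy is to argue on the moment side using an operator realization. I would take free unitaries $U\sim\mu$ and $V\sim\nu$ in some tracial noncommutative probability space $(\mathcal{A},\tau)$, so that $\mu\boxtimes\nu$ is the distribution of $UV$ and $m_n(\mu\boxtimes\nu)=\tau((UV)^n)$ for $n\in\mathbb{Z}$. Using the trace property,
\[
\overline{\tau((UV)^n)}=\tau\bigl(((UV)^n)^*\bigr)=\tau((V^*U^*)^n).
\]
Since free independence is preserved under the $*$-operation, $U^*$ and $V^*$ are again free, and by Proposition~\ref{Symm}\eqref{item:symmetric1} they are distributed as $\mu$ and $\nu$ respectively. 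Combined with commutativity of $\boxtimes$, this yields $V^*U^*\sim\nu\boxtimes\mu=\mu\boxtimes\nu$, so $\tau((V^*U^*)^n)=\tau((UV)^n)$. Consequently every moment of $\mu\boxtimes\nu$ is real, and condition~\eqref{item:symmetric3} of Proposition~\ref{Symm} delivers the symmetry.

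The principal merit of this approach is its uniformity: the argument goes through verbatim whether or not $\mu$, $\nu$ or $\mu\boxtimes\nu$ has nonzero first moment, so no separate treatment is needed for the degenerate case $\mu\boxtimes\nu=\mathbf{h}$. A direct attempt via the $\Sigma$-transform characterization~\eqref{item:symmetric4} would have required such a case distinction, since $\Sigma_\mu$ is only defined when $\mu$ has nonzero first moment. The only nonroutine point is thus the decision to reduce to reality of moments and invoke the two standard facts—preservation of freeness under $*$ and commutativity of $\boxtimes$—rather than pursuing the $\Sigma$-transform route.
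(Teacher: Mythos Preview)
Your argument is correct and, like the paper, it reduces to showing that all moments of $\mu\boxtimes\nu$ (resp.\ $\mu\circledast\nu$) are real and then invokes Proposition~\ref{Symm}\eqref{item:symmetric3}. The difference lies only in how that reality is established. The paper dispatches both cases in one line by noting that each moment of the convolution is a polynomial with real coefficients in the moments of $\mu$ and $\nu$ (a combinatorial fact coming from the moment--cumulant relations for freeness, and trivially from $m_n(\mu\circledast\nu)=m_n(\mu)m_n(\nu)$ in the classical case). You instead argue operator-theoretically via the adjoint, using that $U^*,V^*$ remain free with the same marginals and that $\boxtimes$ is commutative. Your route is slightly longer but more self-contained: it does not appeal to the polynomial structure of mixed moments, and as you observe it handles the zero-mean case without a separate check. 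Both arguments are equally valid realizations of the same underlying idea.
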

\begin{proof}
Any moment of $\mu \boxtimes \nu$ can be expressed as a polynomial (with real coefficients) of moments of $\mu$ and those of $\nu$, and hence is real.  The same arguments apply to $\mu \circledast \nu$. 
\end{proof}
This corollary implies a part of Theorem \ref{U1}: $\mu \boxtimes \lambda_t$ is symmetric whenever $\mu$ is.

\section{Unimodal probability measures on the unit circle}\label{sec4}
%%%%%%%%%%%%%

In this section, we introduce the concept of unimodal probability measures on $\mathbb{T}$. Recall that a probability measure $\mu$ on $\mathbb{R}$ is said to be {\it unimodal with mode $a\in\mathbb{R}$} if it is written as 
\begin{equation}
\mu(dx)=c\delta_a+f(x)\,dx,
\end{equation}
where $c\geq0$ and the function $f\colon\mathbb{R}\rightarrow[0,\infty)$ is non-decreasing on $(-\infty,a)$ and is non-increasing on $(a,\infty)$. Unimodal distributions can be characterized from the viewpoint of geometric function theory: $\mu$ is unimodal with some mode if and only if the Cauchy-Stieltjes transform 
\begin{equation}
G_\mu(z) =\int_{\R}\frac{1}{z-x} d\mu(x), \qquad \Im(z)>0, 
\end{equation}
is univalent and its range is horizontally convex, namely if $z,w$ are points in the range of $G_\mu$ with the same imaginary part then the line segment connecting them is also contained in the range \cite[Theorem 6.24]{FHS18}. 

According to \cite{FHS18}, we similarly define the concept of unimodal probability measures on $\mathbb{T}$ as follows.

\begin{definition}
Consider $\phi, \psi\in\mathbb{R}$ with $0\le \phi-\psi\le 2\pi$. A probability measure $\mu$ on $\mathbb{T}$ is said to be {\it unimodal with mode $\phi$ and antimode $\psi$} if $\mu$ is written as
\begin{equation}\label{eq:unimodal}
\mu(d\theta)=c\delta_{\phi}+f(\theta)\,d\theta,
\end{equation}
for some nonnegative number $c$ and a function $f\colon (\psi, \psi+2\pi)\rightarrow [0,\infty)$ which is non-decreasing on $(\psi,\phi)$ and non-increasing on $(\phi, \psi+2\pi)$. In particular, if $\psi=\phi$ (resp.\ $\psi+2\pi=\phi$), then we may understand that $f$ is non-increasing (resp.\ non-decreasing) on $(\psi,\psi+2\pi)$. 
%In the case, the function $f(x)$ is called a {\it unimodal function of $\mu$}.
\end{definition}

Note that unimodality on $\mathbb T$ can be characterized by the univalence (injectivity) of $\psi_\mu$ and  the "vertical convexity" of the range $\psi_\mu(\mathbb D)$, namely for any $z ,w \in \psi_\mu(\mathbb D)$ having the same real part and for any $t \in(0,1)$, we have $(1-t)z + t w \in \psi_\mu(\mathbb D)$; see \cite[Remark 7.17]{FHS18}. 

\begin{example}\label{wCauchy}
The Poisson kernel (or the wrapped Cauchy distribution) $\PK_{r,\psi}$, defined in \eqref{eq:PK}, is unimodal with mode $\psi$ and antimode $-\pi+\psi$. Moreover, the probability measure
$
p \delta_\psi+(1-p)\PK_{r,\psi}
$
is also unimodal with mode $\psi$ and antimode $-\pi+\psi$ for $p\in(0,1)$.

For other examples, it is proved in \cite[Corollary 3.13]{Z15} that the free normal distribution $\lambda_t$ on the unit circle  is unimodal with mode $0$ and antimode $-\pi$ for all $t>0$. 
\end{example}

It is known that the set of all unimodal probability measures on $\mathbb{R}$ is closed with respect to the weak convergence. A similar result holds for the unit circle by using \cite[Lemma 2.11, Theorem 7.16]{FHS18}. 

\begin{lemma}\label{lem:FHS18}
The set of all unimodal probability measures on $\mathbb{T}$ is closed with respect to the weak convergence.
\end{lemma}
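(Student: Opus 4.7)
The plan is to base the argument on the geometric characterization recalled in the excerpt just before the lemma (attributed to \cite[Theorem 7.16, Remark 7.17]{FHS18}): a probability measure $\mu$ on $\mathbb{T}$ is unimodal if and only if $\psi_\mu$ is univalent on $\mathbb{D}$ and its range $\psi_\mu(\mathbb{D})$ is vertically convex. Given a weakly convergent sequence $\mu_n \to \mu$ of unimodal measures, the task reduces to transferring these two properties from the sequence $\psi_{\mu_n}$ to the limit function $\psi_\mu$.

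Since the kernel $\xi \mapsto \xi z/(1-\xi z)$ is continuous on $\mathbb{T}$ and bounded uniformly for $z$ in compact subsets of $\mathbb{D}$, weak convergence gives locally uniform convergence $\psi_{\mu_n} \to \psi_\mu$ on $\mathbb{D}$. Applying Hurwitz's theorem to the univalent sequence $\psi_{\mu_n}$ shows that $\psi_\mu$ is either univalent or constant. The constant case forces $\psi_\mu \equiv 0$ (since $\psi_\mu(0)=0$), which makes every nontrivial Fourier coefficient of $\mu$ vanish, so $\mu$ is the normalized Haar measure; this measure is trivially unimodal by the definition, because its constant density is both non-decreasing and non-increasing on any sub-arc.

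If instead $\psi_\mu$ is nonconstant and univalent, then since all maps fix $0$ the Carathéodory kernel theorem yields $\Omega_n := \psi_{\mu_n}(\mathbb{D}) \to \Omega := \psi_\mu(\mathbb{D})$ in the kernel sense. To transfer vertical convexity, I would fix $z, w \in \Omega$ with $\mathrm{Re}(z)=\mathrm{Re}(w)$, choose small closed disks around $z$ and $w$ lying in $\Omega$, and observe that they lie in $\Omega_n$ eventually. Because $w-z$ is purely imaginary, for each perturbation $\zeta$ close to $z$ the point $\zeta + (w-z)$ close to $w$ has the same real part as $\zeta$, so the vertical convexity of $\Omega_n$ places the segment $[\zeta, \zeta+(w-z)]$ inside $\Omega_n$. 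Taking the union over such perturbations produces an honest two-dimensional open rectangular neighborhood $R$ of $[z,w]$ contained in $\Omega_n$ eventually. Linking $R$ to $0$ through a compact path inside $\Omega$ (which is again eventually in $\Omega_n$) yields an open connected set $U$ containing $\{0\} \cup [z,w]$ with $U \subset \Omega_n$ for all large $n$, so the kernel definition of $\Omega$ forces $U$, and in particular $[z,w]$, to lie in $\Omega$.

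The main technical obstacle I anticipate is precisely this last step: converting the pointwise inclusion $[z,w] \subset \Omega_n$ (eventually) into the global inclusion $[z,w] \subset \Omega$. A naive attempt to write each $p \in [z,w]$ as $\psi_{\mu_n}(\zeta_n)$ and pass to a subsequential limit would fail whenever $|\zeta_n| \to 1$, which can occur if $\mu_n$ develops atoms whose conjugates accumulate on the unit circle; the two-dimensional thickening above sidesteps this difficulty. In a clean write-up one could simply invoke \cite[Lemma 2.11]{FHS18}, which is presumably the abstract preservation statement for one-sided linear convexity of image domains under Carathéodory kernel convergence that the authors point to.
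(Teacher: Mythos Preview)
Your proposal is correct and follows essentially the same route as the paper. The paper gives no argument beyond citing \cite[Lemma 2.11, Theorem 7.16]{FHS18}; you have correctly unpacked what those citations encode---the characterization of unimodality via univalence and vertical convexity of $\psi_\mu(\mathbb{D})$, locally uniform convergence $\psi_{\mu_n}\to\psi_\mu$, the Hurwitz dichotomy with the Haar-measure degenerate case, and the Carath\'eodory kernel argument (with the thickening to an open stadium) that transfers vertical convexity in the limit---and you even identify \cite[Lemma 2.11]{FHS18} as the abstract preservation statement behind the final step.
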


Moreover, unimodal distributions can be approximated by smooth densities. 

\begin{lemma}\label{wclosed}
For a unimodal distribution $\mu$ there is a sequence of probability measures $\{\mu_n\}_{n\geq1}$ which have $C^\infty$ densities such that $\mu_n$ weakly converges to $\mu$ as $n\to\infty$. 
\end{lemma}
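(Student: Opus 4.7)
The plan is to mollify $\mu$ by classical multiplicative convolution with a family of smooth kernels approaching the identity $\delta_1$ on $\mathbb T$. A ready-made choice is the Poisson kernel $\PK_{r,0}$ from \eqref{eq:PK}: its density is real-analytic on $\mathbb T$ for each fixed $r\in(0,1)$, and as $r\uparrow 1$ the measure $\PK_{r,0}$ converges weakly to $\delta_1$. Accordingly I would set $\mu_n := \mu \circledast \PK_{r_n,0}$ for some sequence $r_n \uparrow 1$ and verify the two required properties: smoothness of the density and weak convergence $\mu_n \to \mu$.

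For smoothness, the density of $\mu_n$ is
\[
h_n(\theta) = \int_{\mathbb T} \frac{d\PK_{r_n,0}}{d\theta}(\theta-\phi)\, \mu(d\phi),
\]
and since the Poisson density together with all its $\theta$-derivatives is continuous on the compact set $\mathbb T$, hence uniformly bounded, differentiation under the integral sign is legal to arbitrary order. This gives $h_n\in C^\infty(\mathbb T)$.

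For weak convergence, let $g\in C(\mathbb T)$. Fubini yields
\[
\int_{\mathbb T} g\, d\mu_n = \int_{\mathbb T} (g \circledast \PK_{r_n,0})(\phi)\, \mu(d\phi),
\]
and the standard approximate-identity property of the Poisson kernel on $\mathbb T$ gives $g \circledast \PK_{r_n,0} \to g$ uniformly. Dominated convergence then yields $\int g\, d\mu_n \to \int g\, d\mu$, which is weak convergence. The unimodality hypothesis on $\mu$ plays no role in the argument so far.

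The only substantive obstacle would arise if the intended statement additionally requires each $\mu_n$ to be unimodal, which is what would make this lemma genuinely useful in tandem with Lemma \ref{lem:FHS18}. That refinement is a circle analog of Wintner's theorem: the multiplicative convolution of a unimodal probability measure on $\mathbb T$ with a symmetric unimodal one is unimodal. When $\mu$ is already symmetric this is covered by Theorem \ref{thm:classical_symmetric_unimodal}. In the general case, after translating so that the mode of $\mu$ sits at $0$, one would use the symmetric decreasing profile of $\PK_{r,0}$ to check that $h_n$ is monotone on each side of the mode; the delicate point is the behavior across the mode, which should reduce to the symmetry of the mollifier together with the monotonicity-preserving nature of convolution with a symmetric kernel.
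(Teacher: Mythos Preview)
Your Poisson-kernel mollification correctly proves the statement \emph{as literally written}: smoothness and weak convergence follow exactly as you say, and indeed the unimodality hypothesis on $\mu$ is irrelevant for that much. The paper, however, proves (and later uses) the stronger fact that each approximant $\mu_n$ can be taken \emph{unimodal}; this is the content that makes the lemma pair usefully with Lemma~\ref{lem:FHS18} in the proof of Theorem~\ref{thm:classical_symmetric_unimodal}. On that stronger reading your proposal has two real gaps.

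First, invoking Theorem~\ref{thm:classical_symmetric_unimodal} to handle the symmetric case is circular: the proof of that theorem begins precisely by applying Lemma~\ref{wclosed} to reduce to smooth densities. Second, your fallback sketch for the general case---that convolution with the symmetric kernel $\PK_{r,0}$ should preserve unimodality---is contradicted by the paper itself: Theorem~\ref{s-unim} shows that $\PK_{r,0}$ is \emph{not} strongly unimodal for $r$ near~$1$, so there exist unimodal $\mu$ for which $\PK_{r,0}\circledast\mu$ fails to be unimodal exactly in the regime $r\uparrow 1$ you need. Thus Poisson-kernel mollification cannot, in general, produce unimodal approximants.

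The paper avoids this by a different route: it lifts the density to a compactly supported function on $\mathbb R$ (after rotating the antimode to $-\pi$), removes the atom, trims near the antimode and flattens near the mode, and then applies \emph{one-sided} mollifiers $\varphi_\epsilon$ (supported on $[-1,0]$) separately on each monotone half. One-sided mollification preserves monotonicity on each half, and the flattening near the mode lets the two halves be glued smoothly. This hands-on construction sidesteps any appeal to strong unimodality of a kernel, which is essential since no such kernel on $\mathbb T$ is known to exist (cf.\ the open problems following Theorem~\ref{s-unim}).
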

\begin{proof}
We only sketch the proof which is similar to \cite[Lemma 6]{HT}. Assume that $\mu$ is of the form \eqref{eq:unimodal}. We may assume that the mode $\phi$ and the antimode $\psi$ are different, and that the antimode $\psi$ is $-\pi$. By approximating $\delta_{\phi}$ with unimodal Haar absolutely continuous distributions supported on a small arc, we may assume that $\mu$ does not have an atom. We regard the density function $f$ as a function on $\R$ supported on $[-\pi,\pi]$. 
By cutting $f$ near $\pm \pi$, flattening $f$ near $\phi$ and normalizing, we may assume that $f(x) =0$ for all $|x|> \pi-\alpha$ for some $\alpha \in(0,\pi)$,  and that $f(x)$ is constant for all $|x-\phi|<\beta$ for some $\beta>0$. Then we apply a mollifier $\varphi_\epsilon := \epsilon^{-1}\varphi(\epsilon^{-1} \cdot)$ with a smooth nonnegative function $\varphi$ supported on $[-1,0]$ to get a smooth function $\varphi_\epsilon \ast f$ which is unimodal on $(\phi,\infty)$, as done in \cite[Lemma 6]{HT}. A similar construction works for the opposite half-line $(-\infty,\phi)$, and combining the two we obtain a unimodal function $f_\epsilon$ on $\R$. Note that if $\epsilon$ is sufficiently small, then $f_\epsilon$ is still supported on $[-\pi,\pi]$, and hence can be regarded as a unimodal density on $\mathbb T$. 
\end{proof}

Unimodality can be characterized in terms of level sets under some assumptions. This idea in the context of free probability was first used in \cite{MR3263926} to prove that a ``free gamma distribution'' is unimodal.

\begin{lemma}\label{lem:basic_unimodal}
Let $\mu$ be a  Haar absolutely continuous distribution on $\mathbb T$. Suppose that its density $p$ extends to a continuous function on $\mathbb T$ and is real analytic in $\{\xi \in \mathbb T: p(\xi)>0\}$. Then $\mu$ is unimodal if and only if, for any $a>0$, the equation $p(\xi)=a$ has at most two solutions $\xi\in \mathbb T$.  
\end{lemma}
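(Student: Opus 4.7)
My plan is to prove the two implications separately via level-set topology, with analyticity used only to rule out plateaus.

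For the $(\Rightarrow)$ direction, I would start from the definition: $\mu=p\,d\theta$ (no atom, since $\mu$ is Haar absolutely continuous), with $p$ non-decreasing on $(\psi,\phi)$ and non-increasing on $(\phi,\psi+2\pi)$. For $a>0$, $\{p=a\}$ intersected with each monotonic arc is a closed sub-interval (possibly empty or a single point). If one such sub-interval had positive length, then $p\equiv a>0$ on it, and by real-analyticity of $p$ on $\{p>0\}$ plus analytic continuation, $p$ would equal $a$ throughout the entire connected component $C$ of $\{p>0\}$ containing this sub-interval. A boundary argument---continuity forces $p=a>0$ on $\partial C$, but $\partial C\cap\{p>0\}=\emptyset$ since $C$ is a connected component of the open set $\{p>0\}$---then forces $\partial C=\emptyset$, hence $C=\mathbb{T}$, i.e., $\mu$ is the uniform measure $\mathbf{h}$. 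Excluding this degenerate case (for which the literal statement needs a minor caveat), each sub-interval reduces to at most one point, so $\{p=a\}$ has at most two solutions.

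For the $(\Leftarrow)$ direction, let $M=\max p>0$. For $a\in(0,M)$, $\{p>a\}$ and $\{p<a\}$ are disjoint open non-empty subsets of $\mathbb{T}\setminus\{p=a\}$; since $\mathbb{T}$ minus a single point is connected, $|\{p=a\}|=1$ is impossible, so the hypothesis forces $|\{p=a\}|=2$ and makes each of $\{p>a\},\{p<a\}$ a single open arc. The nested intersection $\bigcap_{a<M}\{p>a\}=\{p=M\}$ is connected and of cardinality at most two, hence a single point $\phi$. Similarly $\{p>0\}=\bigcup_{a>0}\{p>a\}$ is a single open arc $I$ (possibly all of $\mathbb{T}$). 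If $I=\mathbb{T}$, the analogous analysis of the shrinking family $\{p<a\}$ yields a unique strict local minimum $\psi$; otherwise the antimode can be chosen anywhere in $\mathbb{T}\setminus I$.

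It remains to show $p$ is strictly monotonic on each of the two half-arcs in $I$ bounded by $\phi$ (and by $\psi$ or by the endpoints of $I$ on the far side). Here analyticity on $\{p>0\}$ is essential: it rules out plateaus, since a constant positive value on any sub-arc would yield infinitely many solutions at that value. Suppose for contradiction that $p$ is non-monotonic on one such half-arc; since $p$ has no plateaus in $I$, non-monotonicity produces a strict local maximum $\phi'$ in the interior with $a':=p(\phi')\in(0,M)$. Then three distinct solutions to $p=a'$ can be exhibited: $\phi'$ itself, a point in the sub-arc $(\phi,\phi')$ (by IVT, using $p(\phi)=M>a'$ and $p<a'$ just to the left of $\phi'$ by strict local-maximality), and a point on the opposite half-arc (by IVT, since $p$ traverses the full range $[0,M]$ or $[\min p,M]$ there). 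These three solutions contradict the hypothesis, so $p$ is strictly monotonic on each half-arc, and $\mu$ is unimodal. The main obstacle lies in this counting step, in particular producing three distinct solutions from an arbitrary non-monotonicity while cleanly handling the subcases $I=\mathbb{T}$ and $I\subsetneq\mathbb{T}$; the topological reduction of $\{p>a\}$ to a single arc is the key lemma that makes the counting tight.
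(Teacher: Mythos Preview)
Your proof is correct. The $(\Rightarrow)$ direction matches the paper's (analyticity rules out plateaus, so strict monotonicity on each half-arc gives at most two solutions), and you rightly flag the Haar-measure edge case that the paper glosses over. For $(\Leftarrow)$ the paper argues more directly by contrapositive: assuming $\mu$ is not unimodal, it fixes a global minimum $\psi$ and a strict global maximum $\phi$, notes that $p$ fails to be monotone on one of the two arcs between them, and on that single arc finds $\theta_1<\theta_2$ with $p(\psi)<p(\theta_2)<p(\theta_1)<p(\phi)$; for any $a\in(p(\theta_2),p(\theta_1))$ the intermediate value theorem then gives one solution in each of $(\psi,\theta_1)$, $(\theta_1,\theta_2)$, $(\theta_2,\phi)$. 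All three solutions lie on the \emph{same} half-arc, so there is no need to first establish that superlevel sets are single arcs or that the mode is unique, nor to invoke the opposite half-arc. Your level-set route makes the structure more explicit but requires extra bookkeeping (for instance, verifying $a'>\min p$ so that the intermediate value theorem applies on the opposite arc); the paper's contrapositive is shorter precisely because it sidesteps that structure.
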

\begin{proof}
Assume that $\mu$ is unimodal with mode $\phi$ and antimode $\psi$. Since $p$ is continuous we have $0<\phi-\psi <2\pi$. We identify the function $p$ on $\mathbb{T}$ with a function on $[\psi,\psi+2\pi)$. Since $p$ is real analytic wherever it is positive, the function $p$ is strictly increasing on $\{\theta \in (\psi,\phi): p(\theta)>0\}$ and strictly decreasing on $\{\theta\in (\phi,\psi+2\pi): p(\theta)>0\}$. 

\begin{enumerate}[{Case} 1.]
\item 

 If $a>p(\phi)$ or $0<a<p(\psi)$, then the equation $p(\theta)=a$ has no solutions $\theta\in [\psi,\psi+2\pi)$. 

\item If $a=p(\phi)$, then the equation $p(\theta)=a$ has only one solution, since $p(\phi)$ is the strict maximum.

\item If $a=p(\psi)>0$, then the equation $p(\theta)=a$ has only one solution since $p(\psi)$ is the strict minimum. 

\item If $p(\psi)<a< p(\phi)$, then by the intermediate value theorem and the monotonicity the equation $p(\theta)=a$ has a unique solution in each of $[\psi,\phi)$ and $(\phi,\psi+2\pi)$. 
\end{enumerate}
In any case, we conclude that the equation $p(\theta)=a$ has at most two solutions $\theta$ in $[\psi,\psi+2\pi)$. 

Conversely, we assume that $\mu$ is not unimodal. Let $\psi$ and $\phi$ ($0< \phi - \psi < 2\pi$) be points where $p$ takes a global minimum and a (strict) global maximum, respectively. Then either $p$ is not  non-decreasing on $(\psi, \phi)$ or not non-increasing on $(\phi, \psi+2\pi)$. Without loss of generality we may focus on the former case. Then there exist $\psi < \theta_1 < \theta_2 <\phi$ such that $p(\psi)< p(\theta_2) <p(\theta_1)<p(\phi)$. For any $a \in (p(\theta_2), p(\theta_1))$, by the intermediate theorem the equation $p(\theta)=a$ has at least one solution in each of the intervals $(\psi,\theta_1)$, $(\theta_1,\theta_2)$ and $(\theta_2,\phi)$. 
Therefore the equation $p(\theta)=a$ has at least three solutions. 
\end{proof}

Combining Zhong's density formula and Lemma \ref{lem:basic_unimodal} one can characterized the unimodality for $\mu\boxtimes \lambda_t$ as follows. 

\begin{lemma}\label{lem:unimodal}
Suppose that $t>0$ and $\mu$ is a probability measure on $\mathbb T$. The following statements are equivalent. 
\begin{enumerate}[(1)]
\item $\mu \boxtimes \lambda_t$ is unimodal. 
\item\label{cond-unim2} For any $r\in(0,1)$ the equation 
\begin{equation}\label{sol1}
\frac{1-r^2}{-2\log r}\int_{[-\pi,\pi)} \frac{1}{|1-re^{i(x+\theta)}|^2}\,d\mu(x)=\frac{1}{t}
\end{equation} 
has at most two solutions $\theta \in [-\pi,\pi)$. 
\end{enumerate}
\end{lemma}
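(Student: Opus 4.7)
The plan is to reduce the lemma to a direct application of Lemma~\ref{lem:basic_unimodal} combined with Zhong's density formula~\eqref{eq:density}. First, I would check the hypotheses of Lemma~\ref{lem:basic_unimodal} for $\mu\boxtimes\lambda_t$: the density $p_t$ extends continuously to $\mathbb{T}$ and is real analytic on the set where it is positive, both recorded in the paragraph containing \eqref{eq:density}. Hence Lemma~\ref{lem:basic_unimodal} gives that $\mu\boxtimes\lambda_t$ is unimodal if and only if, for every $a>0$, the level set $\{\xi\in\mathbb{T}:p_t(\xi)=a\}$ contains at most two points.

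Next, I would reparametrize the level sets through the homeomorphism $\Psi_{t,\mu}\colon\mathbb{T}\to\mathbb{T}$. Since $\theta\mapsto\overline{\Psi_{t,\mu}(e^{i\theta})}$ is a bijection from $[-\pi,\pi)$ onto $\mathbb{T}$, the number of elements of $\{\xi\in\mathbb{T}:p_t(\xi)=a\}$ equals the number of $\theta\in[-\pi,\pi)$ satisfying $p_t(\overline{\Psi_{t,\mu}(e^{i\theta})})=a$. By \eqref{eq:density} this is the same as $v_{t,\mu}(\theta)=e^{-\pi t a}$. As $a$ varies over $(0,\infty)$ the quantity $r:=e^{-\pi t a}$ bijects with $(0,1)$, so unimodality of $\mu\boxtimes\lambda_t$ is equivalent to the condition that, for every $r\in(0,1)$, the equation $v_{t,\mu}(\theta)=r$ has at most two solutions in $[-\pi,\pi)$.

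The final step is to identify $v_{t,\mu}(\theta)=r$ (for $r\in(0,1)$) with equation \eqref{sol1}. For $\theta\in U_{t,\mu}$ this is immediate since $v_{t,\mu}(\theta)$ is characterized in the preliminaries as the unique $r\in(0,1)$ solving \eqref{eq:v}. For $\theta\notin U_{t,\mu}$ one has $v_{t,\mu}(\theta)=1\ne r$, and by the supremum definition of $v_{t,\mu}$ combined with Zhong's identification $U_{t,\mu}=\{\theta:v_{t,\mu}(\theta)\ne 1\}$, equation \eqref{sol1} admits no solution $r\in(0,1)$ for such $\theta$. Thus the solution set of \eqref{sol1} in $\theta$ coincides with $\{\theta:v_{t,\mu}(\theta)=r\}$, finishing the equivalence.

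I do not foresee a substantive obstacle: every ingredient (continuity and real analyticity of $p_t$, the homeomorphism $\Psi_{t,\mu}$, the density formula, and the characterization of $v_{t,\mu}$) is already available in the preliminaries. The only delicate point is ruling out spurious $\theta\notin U_{t,\mu}$ in the solution set of \eqref{sol1}, which follows from the explicit definitions of $U_{t,\mu}$ and $v_{t,\mu}$.
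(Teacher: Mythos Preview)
Your proposal is correct and follows essentially the same route as the paper's own proof: verify the hypotheses of Lemma~\ref{lem:basic_unimodal}, pull back level sets of $p_t$ through the homeomorphism $\overline{\Psi_{t,\mu}}$ to obtain $v_{t,\mu}(\theta)=e^{-\pi t a}$, and then identify this with equation~\eqref{sol1} via the defining property~\eqref{eq:v} of $v_{t,\mu}$. Your handling of the boundary case $\theta\notin U_{t,\mu}$ is slightly more explicit than the paper's (which asserts the set identity~\eqref{solsp} directly), but the underlying argument is the same.
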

\begin{proof}
This proof is very similar to \cite[Proposition 3.8]{HT} or \cite[Lemma 3.1]{HU18}. Recall that $\mu\boxtimes \lambda_t$ is Haar absolutely continuous, its density function $p_t$ is continuous on $\mathbb T$ and real analytic in $\overline{\Psi_{t,\mu}(U_{t,\mu})}$ by Section 2.3. Assume firstly the condition (2). Since $\Psi_{t,\mu}$ is a homeomorphism of $\mathbb{T}$, it suffices to show that for any $a>0$ the equation
\begin{equation}
a=p_t(\overline{\Psi_{t,\mu}(e^{i\theta})})=-\frac{\log(v_{t,\mu}(\theta))}{\pi t}, \qquad \theta\in [-\pi,\pi)
\end{equation}
has at most two solutions $\theta\in [-\pi,\pi)$ by Lemma \ref{lem:basic_unimodal}.  Since $v_{t,\mu}(\theta)$ is a unique solution of the equation \eqref{eq:v} when it is positive, then we have
\begin{equation}\label{solsp}
\left\{ \theta\in [-\pi,\pi): a=-\frac{\log(v_{t,\mu}(\theta))}{\pi t} \right\}=\left\{ \theta\in [-\pi,\pi): \frac{1-(e^{-\pi a t})^2}{-2\log(e^{-\pi a t})}\int _{[-\pi,\pi)}\frac{d\mu(x)}{|1-e^{-\pi a t}e^{i(x+\theta)}|^2}=\frac{1}{t}\right\}.
\end{equation}
By the assumption (2) the equation $ a=-\frac{\log(v_{t,\mu}(\theta))}{\pi t} $ has at most two solutions $\theta \in [-\pi,\pi)$. Conversely, assume that the condition (2) does not hold. Then for some $r\in(0,1)$ there are three distinct solutions $\theta_1,\theta_2,\theta_3\in [-\pi,\pi)$ to the equation \eqref{sol1}. By \eqref{solsp} this shows that $v_{t,\mu}(\theta_k)=r$ and hence $p_t(\overline{\Psi_{t,\mu}(e^{i\theta_k})})=-\frac{\log r}{\pi t}$ for $k=1,2,3$. 
Therefore $\mu\boxtimes \lambda_t$ is not unimodal by Lemma \ref{lem:basic_unimodal} again. 
\end{proof}

\section{Main results}\label{sec5}
%%%%%%%%%%%%%

In Sections \ref{sec5.1} -- \ref{sec5.3}, we prove the main theorem announced as Theorem \ref{U2}, namely:  in Section \ref{sec5.1}, we study unimodality for the multiplicative convolution of two symmetric probability measures on $\mathbb{T}$; in Section \ref{sec5.2}, we discuss when $\PK_{r,\psi}\circledast \mu$ is unimodal for sufficiently small $r\in(0,1)$ and when it is not unimodal at any time $t>0$; in Section \ref{sec5.3}, we prove that $\PK_{r,\psi}$ is not strongly unimodal for any $r\in(0,1)$ sufficiently close to $1$. 

In Sections \ref{sec5.4} -- \ref{sec5.6}, we prove the main theorem announced as Theorem \ref{U1}, namely: in Sections \ref{sec5.4} and \ref{sec5.5} we discuss when $\mu\boxtimes \lambda_t$ is unimodal for all $t>0$, when it is unimodal for sufficiently large $t>0$, and when it is not unimodal at any $t>0$; in Section \ref{sec5.6}, we conclude the failure of freely strong unimodality for $\lambda_t$ at sufficiently small $t>0$.

%%%%%%%%%%%%%%%
%Multiplicative convolution of symmetric unimodal distributions
%%%%%%%%%%%%%%%

\subsection{Multiplicative convolution of symmetric unimodal distributions}\label{sec5.1}

In this section, we study unimodality for the multiplicative convolution of two symmetric unimodal probability measures. It was proved by Wintner that classical additive convolution $\ast$ preserves the symmetric unimodal probability measures on $\mathbb R$ (see \cite[Exercise 29.22]{Sat13} or the original article \cite[Theorem XIII]{W36}). We conjectured a similar property of free additive convolution $\boxplus$ in \cite{HU18}, which is still open.  We now prove the corresponding property for classical multiplicative convolution $\circledast$ on $\mathbb T$. 

\begin{theorem} \label{thm:classical_symmetric_unimodal}
Let $\mu$ and $\nu$ be symmetric unimodal probability measures on $\mathbb T$. Then so is $\mu \circledast \nu$. 
\end{theorem}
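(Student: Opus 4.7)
The plan is to reduce the statement to convolutions of uniform distributions on symmetric arcs and then to recombine via a Khintchine-type mixture representation. Symmetry of $\mu \circledast \nu$ is already handled by Corollary \ref{Ssym}, so only unimodality needs attention. A symmetric unimodal measure on $\mathbb T$ may always be taken to have mode at $1$ or at $-1$: its atomic part $c\,\delta_\phi$ is itself invariant under conjugation, which forces $\phi \in \{0,\pi\}$ when $c>0$, and when $c=0$ one is free to choose $\phi\in\{0,\pi\}$ by the symmetry of the density. Since multiplication by $\delta_{-1}$ preserves symmetry and interchanges mode with antimode, after at most one such shift on each factor one may assume that $\mu$ and $\nu$ both have mode $1$ and antimode $-1$.

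For $\alpha \in [0,\pi]$ write $\mathrm{u}_\alpha$ for the uniform distribution on the arc $\{e^{i\theta} : \theta \in [-\alpha,\alpha]\}$, with the conventions $\mathrm{u}_0 := \delta_1$ and $\mathrm{u}_\pi = \mathbf{h}$. I would first establish a Khintchine-type decomposition: every symmetric unimodal measure $\mu$ on $\mathbb T$ with mode $1$ can be written as $\mu = \int_{[0,\pi]} \mathrm{u}_\alpha\, d\tau(\alpha)$ for some probability measure $\tau$ on $[0,\pi]$. Concretely, if $\mu = c\,\delta_1 + f(\theta)\,d\theta$ with $f$ symmetric and non-increasing on $[0,\pi]$, one takes
\[
\tau := c\,\delta_0 + 2\pi f(\pi)\,\delta_\pi + 2\alpha\, d(-f)(\alpha)\big|_{(0,\pi)},
\]
where $d(-f)$ is the Lebesgue--Stieltjes measure of the non-decreasing function $-f$; integration by parts confirms that $\tau$ has total mass $1$, and computing the density of the mixture at $\theta\in(0,\pi)$ yields $f(\theta)-f(\pi)+f(\pi)=f(\theta)$, so the mixture recovers $\mu$.

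The heart of the argument is the base case: $\mathrm{u}_\alpha \circledast \mathrm{u}_\beta$ is symmetric unimodal with mode $1$ for all $\alpha,\beta\in[0,\pi]$. Assuming $\alpha\le\beta$, the classical convolution of the two uniform densities on $\mathbb R$ is a symmetric trapezoidal function supported on $[-(\alpha+\beta),\alpha+\beta]$. Direct wrapping modulo $2\pi$ and case analysis then show that the resulting density is, on $[0,\pi]$, piecewise a constant, a strictly decreasing affine function, and a strictly smaller constant, in that order. When $\alpha+\beta\le\pi$ no wrapping is needed and the statement is immediate; the substantive case is $\alpha+\beta>\pi$, and the bookkeeping required to combine the wrapped trapezoidal tails with the original trapezoid so that the three monotonic pieces fit together correctly is expected to be the principal obstacle.

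Finally, combining the decomposition with the base case gives
\[
\mu \circledast \nu = \int_{[0,\pi]}\!\int_{[0,\pi]} \mathrm{u}_\alpha \circledast \mathrm{u}_\beta\, d\tau(\alpha)\, d\tau'(\beta),
\]
so the absolutely continuous density of $\mu\circledast\nu$ at $\theta\in(0,\pi]$ is an integral against a positive measure of functions each non-increasing in $\theta$, hence itself non-increasing; any atomic contribution can arise only at $1$ (from $\mathrm{u}_0 \circledast \mathrm{u}_0$) and thus only reinforces the mode. Together with symmetry (Corollary \ref{Ssym}) this yields symmetric unimodality of $\mu \circledast \nu$ with mode $1$ and antimode $-1$, completing the proof.
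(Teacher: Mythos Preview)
Your proposal is correct and shares its backbone with the paper's proof: both represent a symmetric unimodal measure with mode at $1$ as a mixture of the uniform measures $\mathrm{u}_\alpha$ on symmetric arcs (a Khintchine-type decomposition). The execution of the inner step differs, however. The paper decomposes only \emph{one} factor---after first passing to $C^1$ densities via Lemmas~\ref{lem:FHS18} and~\ref{wclosed}---and then verifies directly that $\mathrm{u}_\psi\circledast\nu$ has non-increasing density on $(0,\pi)$ for an \emph{arbitrary} symmetric unimodal $\nu$ with density $g$: the derivative of $\varphi\mapsto\int_{-\psi}^{\psi}g(\varphi-\theta)\,d\theta$ equals $g(\varphi+\psi)-g(\varphi-\psi)$, and a two-line case split (on $\varphi+\psi\le\pi$ versus $\varphi+\psi>\pi$, using $g(\varphi+\psi)=g(2\pi-\varphi-\psi)$ in the second case) shows this is $\le0$. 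This completely sidesteps the trapezoidal wrapping bookkeeping you flag as the principal obstacle. Your double decomposition and explicit base case $\mathrm{u}_\alpha\circledast\mathrm{u}_\beta$ do go through (the wrapped density on $[0,\pi]$ is indeed the three-piece non-increasing function you describe), but the paper's route is shorter; on the other hand, your version avoids the preliminary smoothing step and works directly with the Lebesgue--Stieltjes measure $d(-f)$.
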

\begin{proof}
The symmetry follows from Corollary \ref{Ssym} (one can also prove it more directly by formula \eqref{eq:density_classical_convolution} below). We will prove the unimodality. 
By Lemma \ref{lem:FHS18} and Lemma \ref{wclosed}, we may assume that $\mu$ and $\nu$ respectively have $C^1$ densities $f$ and $g$ on $\mathbb T$. We extend $f$ and $g$  to even continuous functions on $\mathbb R$ which have period $2\pi$, that is, $f(\theta+2\pi)=f(\theta)$ and $g(\theta+2\pi)=g(\theta)$ for all $\theta\in\mathbb{R}$.

 Let $a=-f'$ on $(0,\pi)$. Then $a \geq0$ and  
\begin{equation}
f(\theta) = \int_{|\theta|}^\pi a(\psi)\,d\psi, \qquad \theta \in (-\pi,\pi). 
\end{equation}
Let $h$ be the density of $\mu \circledast \nu$. For $\varphi \in (0,\pi)$ we have 
\begin{align}
h(\varphi)
&= \int_{-\pi}^\pi f(\theta) g(\varphi-\theta) \,d\theta  \label{eq:density_classical_convolution} 
=  \int_{-\pi}^\pi \left(\int_{|\theta|}^\pi a(\psi)\,d\psi \right) g(\varphi-\theta) d\theta 
= \int_{0}^\pi a(\psi)\,d\psi \int_{-\psi}^\psi g(\varphi-\theta)d\theta. 
\end{align}
Fix $\psi \in (0,\pi)$ and let $k(\varphi) :=\int_{-\psi}^\psi g(\varphi-\theta)d\theta=\int_{\varphi-\psi}^{\varphi+\psi}g(\theta)\,d\theta$ for $\varphi \in (0,\pi)$. Then 
\begin{align}
k'(\varphi) = g(\varphi+\psi) - g(\varphi-\psi). 
\end{align}
In order to show $h' \leq0$ on $(0,\pi)$, we show that $k'\leq 0$ on $(0,\pi)$.

Case 1: $\varphi+\psi\leq \pi$. Then $k'\leq 0$ because $\varphi+\psi\geq |\varphi-\psi|$. 

Case 2: $\varphi+\psi \geq \pi$. Then $2\pi-\varphi-\psi \leq \pi$ and 
\begin{align}
k'(\varphi)=g(2\pi-\varphi-\psi)-g(\varphi-\psi),
\end{align}
since $g(\varphi+\psi)=g(-\varphi-\psi)=g(2\pi-\varphi-\psi)$. We can check that $2\pi-\varphi-\psi\geq |\varphi-\psi|$, and hence $g(\varphi-\psi)\geq g(2\pi-\varphi-\psi)$.

Therefore $h$ is non-increasing on $(0,\pi)$ and also non-decreasing on $(-\pi,0)$ since $\mu\circledast \nu$ is symmetric. Hence $\mu\circledast \nu$ is unimodal.
\end{proof}

So far we found no counterexample to the corresponding conjecture for $\boxtimes$. 

\begin{conjecture} Let $\mu$ and $\nu$ be symmetric unimodal distributions on $\mathbb T$. Then $\mu \boxtimes \nu$ is also unimodal. 
\end{conjecture}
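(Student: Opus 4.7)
The plan is to imitate the strategy that established Theorem \ref{U1}(1): develop a Zhong-type level-set description of the density of $\mu \boxtimes \nu$ and then invoke the criterion of Lemma \ref{lem:basic_unimodal}. The argument would proceed in three stages, with the third being the genuine difficulty.

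First, I would perform standard reductions. By Lemma \ref{lem:FHS18}, Lemma \ref{wclosed}, and the weak continuity of $\boxtimes$, we may assume that $\mu$ and $\nu$ have strictly positive $C^\infty$ densities which are non-increasing in $|\theta|$ on $(-\pi,\pi)$; in particular both have nonzero first moments and neither equals the Haar measure. A rotation by $\pi$ applied to one factor handles the case where one of the modes sits at $\pi$ (cf.\ Example \ref{wCauchy}). Symmetry of $\mu \boxtimes \nu$ is provided by Corollary \ref{Ssym}, and the pair of subordination functions $\omega_1,\omega_2: \mathbb D \to \mathbb D$ with $\omega_i(0)=0$, $\eta_{\mu\boxtimes\nu}(z) = \eta_\mu(\omega_1(z)) = \eta_\nu(\omega_2(z))$, and $\omega_1(z)\omega_2(z) = z\,\eta_{\mu\boxtimes\nu}(z)$ exists and extends continuously to $\mathrm{cl}(\mathbb D)$ by \cite{BB05, BB07}.

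Second, I would seek a density formula of Zhong type for $\mu \boxtimes \nu$. The density at a boundary point is recoverable from the boundary behaviour of $\eta_{\mu\boxtimes\nu}$, and, in analogy with \cite[Section 3]{Z15}, one expects an implicit level-set equation whose solution $r = v_{\mu,\nu}(\theta) \in (0,1]$ determines the density at the corresponding boundary point as a monotone function of $r$; the equation should couple the Poisson-type integral $F_\mu(r,\theta) := \int_{-\pi}^\pi \frac{d\mu(x)}{|1-re^{i(\theta+x)}|^2}$ with the analogous $F_\nu(r,\cdot)$, and reduce to Zhong's equation \eqref{eq:v} when $\nu = \lambda_t$ (in which case $F_\nu$ collapses to the $\theta$-independent constant $-2\log r / (t(1-r^2))$). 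Given such a formula, Lemma \ref{lem:basic_unimodal} would reduce the conjecture to the statement that for each $r\in(0,1)$ the equation $v_{\mu,\nu}(\theta) = r$ has at most two solutions in $[-\pi,\pi)$.

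The third and hardest step is precisely this level-set count. In Zhong's setting one side of the level equation is $\theta$-independent, so the count follows from unimodality of the classical Poisson-kernel convolution $\PK_{r,0} \circledast \mu$ (Theorem \ref{thm:classical_symmetric_unimodal}), together with sharp monotonicity of the Poisson integral. For general symmetric unimodal $\nu$ the level equation couples $\mu$- and $\nu$-integrals sharing the same radial parameter, and the argument no longer reduces to a pointwise Poisson-kernel comparison. I would attempt two complementary attacks: (i) a Khintchine-type decomposition, writing each symmetric unimodal measure as a mixture of normalised uniforms $U_\alpha$ on symmetric arcs and first analysing the building block $U_\alpha \boxtimes U_\beta$ (whose $\Sigma$-transform is explicit); and (ii) a semigroup deformation $(\nu_s)_{s\in[0,1]}$ along which critical points of the target density are tracked, in the spirit of the $\lambda_t$-flow. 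The fundamental obstacle shared by both routes is that $\boxtimes$ is not affine in its arguments, so mixture decompositions do not linearise the level-set count; overcoming this gap requires a genuinely new ingredient, and presumably explains why the conjecture has resisted proof so far.
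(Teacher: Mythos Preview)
The statement you are attempting is labelled a \emph{Conjecture} in the paper; the authors do not provide a proof and explicitly say they ``found no counterexample'' and only establish two special cases (one factor a Poisson kernel, via Proposition~\ref{prop:Multiplicative with Poisson} and Theorem~\ref{thm:classical_symmetric_unimodal}; one factor $\lambda_t$, via Theorem~\ref{main-t}). So there is no proof in the paper to compare against, and your proposal should be read as a strategy toward an open problem rather than a re-derivation.

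Your outline is honest about where the real difficulty lies, but Stage~2 already contains a genuine gap that you gloss over. You write that ``one expects an implicit level-set equation'' coupling Poisson-type integrals of $\mu$ and $\nu$ and reducing to Zhong's equation when $\nu=\lambda_t$. The paper itself warns that ``there is no useful formula for describing the absolutely continuous part of free multiplicative convolution of general probability measures''; Zhong's formula works because $\Sigma_{\lambda_t}$ has the very special form $\exp\bigl[\tfrac{t}{2}\tfrac{1+z}{1-z}\bigr]$, which makes $\Sigma_{\rho_t}(z)=\Sigma_{\lambda_t}(\eta_\mu(z))$ computable and turns the boundary equation into a single real equation in the radial variable. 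For a general symmetric unimodal $\nu$ there is no reason the subordination images $\omega_1(\mathbb D),\omega_2(\mathbb D)$ should be starlike domains describable by a single radius function $v_{\mu,\nu}(\theta)$, nor that the density of $\mu\boxtimes\nu$ should be a monotone function of such a radius. Thus Stage~2 is not a routine extension of \cite{Z15}; it is itself a substantial open problem, and without it Stage~3 has no object to work with.

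Your Stage~3 alternatives also face the obstacle you yourself name: $\boxtimes$ is not affine, so the Khintchine-type mixture of arc uniforms does not linearise the problem, and there is no obvious one-parameter $\boxtimes$-semigroup interpolating a general symmetric unimodal $\nu$ to $\delta_1$ along which Zhong-type control persists. In short, your plan correctly isolates the two missing ingredients (a general density/level-set formula, and a mechanism to handle the nonlinearity of $\boxtimes$), but supplies neither; this is consistent with the paper's decision to leave the statement as a conjecture.
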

Thanks to Proposition \ref{prop:Multiplicative with Poisson} and Theorem  \ref{thm:classical_symmetric_unimodal}, this conjecture holds when one of the distributions is the Poisson kernel. In Section \ref{sec5.4} the above conjecture is proved when one of the probability measures is the free normal distribution.

%%%%%%%%%%%%%%%
%Eventual unimodality and Non-unimodality for $\PK_{r,\psi}\circledast\mu$
%%%%%%%%%%%%%%%

\subsection{Eventual unimodality and non-unimodality for $\PK_{r,\psi}\circledast\mu$}\label{sec5.2}

Firstly, we give a class of probability distributions $\mu$ on $\mathbb{T}$ such that the distribution $\PK_{r,\psi}\circledast \mu$ is unimodal for sufficiently small $r\in(0,1)$.
\begin{theorem}\label{thm:eventual unimodality of PK}
Let $\varphi \in (0,\pi/2)$ and $\mu$ be a symmetric distribution on $\mathbb{T}$ such that $\text{supp}(\mu)\subset \{e^{i\theta}: \theta\in[-\varphi,\varphi]\}$. Then ${\bf p}_{r,\psi}\circledast\mu $ is unimodal for $r\in(0,r_\varphi)$ where $r_\varphi:=(\cos\varphi)/4$.
\end{theorem}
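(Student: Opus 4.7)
After rotating coordinates I may assume $\psi=0$, since $\PK_{r,\psi}\circledast\mu$ is merely a rotation of $\PK_{r,0}\circledast\mu$ and unimodality is invariant under rotation. Writing $P_r(x) := (1-r^2)/[2\pi(1-2r\cos x+r^2)]$ for the Poisson kernel density centered at the origin, the convolution has density
\[
h(\alpha) = \int_{-\varphi}^{\varphi} P_r(\alpha-\theta)\,\mu(d\theta),\qquad \alpha\in(-\pi,\pi].
\]
Since $P_r$ is even and $\mu$ is symmetric, $h$ is even and real-analytic on $\mathbb T$. Thus it suffices to show $h'(\alpha)\leq 0$ for every $\alpha\in(0,\pi)$; unimodality with mode $0$ and antimode $\pi$ will follow.

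Differentiating under the integral, symmetrizing via $\theta\mapsto-\theta$ (which leaves $\mu$ invariant), and introducing the shorthand
\[
A := 1-2r\cos(\alpha-\theta)+r^2,\quad B := 1-2r\cos(\alpha+\theta)+r^2,\quad C := (1+r^2)-2r\cos\alpha\cos\theta,
\]
one arrives at
\[
h'(\alpha) = -\frac{(1-r^2)r}{2\pi}\int_{-\varphi}^{\varphi}\frac{\sin(\alpha-\theta)B^2 + \sin(\alpha+\theta)A^2}{A^2 B^2}\,\mu(d\theta).
\]
Expanding the sines by the addition formulas and using the elementary identities $A+B=2C$ and $A-B=-4r\sin\alpha\sin\theta$, the numerator factors as
\[
\sin\alpha\bigl[\cos\theta\,(A^2+B^2) - 8rC\cos\alpha\sin^2\theta\bigr].
\]
Since $\sin\alpha>0$ on $(0,\pi)$, the target reduces to showing that the bracketed expression is non-negative for all $\alpha\in(0,\pi)$ and $\theta\in[0,\varphi]$ (the bracket being even in $\theta$).

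For $\alpha\in[\pi/2,\pi]$ this is immediate, since $\cos\alpha\leq 0$ makes the second summand non-positive while $\cos\theta(A^2+B^2)\geq 0$. For $\alpha\in(0,\pi/2)$, the identity
\[
A^2+B^2 = \tfrac{1}{2}\bigl[(A+B)^2+(A-B)^2\bigr] = 2C^2 + 8r^2\sin^2\alpha\sin^2\theta \;\geq\; 2C^2
\]
reduces the target to $\cos\theta\cdot C \geq 4r\cos\alpha\sin^2\theta$. Bounding $\cos\alpha\leq 1$ on the right-hand side and $C \geq (1+r^2) - 2r\cos\theta$ on the left-hand side reduces this further to the one-variable inequality
\[
q_r(c) := 2rc^2 + (1+r^2)c - 4r \;\geq\; 0,\qquad c := \cos\theta \in [\cos\varphi,1].
\]
Since $q_r$ is strictly increasing on $[0,\infty)$, it is enough to check $q_r(\cos\varphi)\geq 0$. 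Direct substitution at $r=r_\varphi=\cos\varphi/4$ gives $q_{r_\varphi}(\cos\varphi) = 9\cos^3\varphi/16 > 0$, and viewing $r\mapsto q_r(\cos\varphi)$ as a quadratic in $r$ one sees it is decreasing on $[0,r_\varphi]$, so positivity holds throughout $r\in(0,r_\varphi)$.

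The main obstacle is discovering the algebraic identity that isolates $\sin\alpha$ as a common factor of the numerator; this is what allows the problem to split into the case $\alpha\in[\pi/2,\pi]$ (handled by signs alone) and $\alpha\in(0,\pi/2)$ (reduced to a one-variable inequality). The order in which the two crude bounds are applied—namely $A^2+B^2\geq 2C^2$ first, then $\cos\alpha\leq 1$ together with a lower bound on $C$—is what makes the positivity threshold of $q_r(\cos\varphi)$ land at exactly $r_\varphi=(\cos\varphi)/4$ rather than at a strictly smaller constant.
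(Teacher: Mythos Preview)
Your proof is correct and follows essentially the same strategy as the paper: reduce to $\psi=0$, differentiate the density, factor out $\sin\alpha$, handle $\alpha\in[\pi/2,\pi)$ by the sign of $\cos\alpha$, and for $\alpha\in(0,\pi/2)$ bound the remaining factor to reach the threshold $r_\varphi=(\cos\varphi)/4$. The only difference is cosmetic bookkeeping: the paper expands the bracketed factor directly into a function $L_r(\theta,x)$ and uses the chain $L_r\geq(1+r^2)^2\cos x-4r(1+r^2)\geq(1+r^2)(\cos\varphi-4r)$, whereas you keep the symmetric $A,B,C$ notation, use $A^2+B^2\geq 2C^2$, and reduce to the one-variable quadratic $q_r(c)$; in fact your bracket equals exactly $2L_r$, so the two computations are algebraically equivalent.
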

\begin{proof}
We may assume that $\psi=0$. The density function $f_r$ of ${\bf p}_{r,0}\circledast\mu$ is given by 
\begin{equation}
f_r(\theta)=\frac{1-r^2}{2\pi} \left(\int_0^\varphi \frac{d\mu(x)}{1-2r\cos(\theta-x)+r^2}+\int_0^\varphi\frac{d\mu(x)}{1-2r\cos(\theta+x)+r^2}\right), \qquad \theta \in [-\pi,\pi). 
\end{equation}
For all $\theta\in (0,\pi)$, we have
\begin{equation}
f_r'(\theta)=-\frac{2r(1-r^2)\sin\theta}{\pi} \int_0^\varphi \frac{L_r(\theta,x)}{(1-2r\cos(\theta-x)+r^2)^2(1-2r\cos(\theta+x)+r^2)^2}d\mu(x),
\end{equation}
where
\begin{equation}\label{eq:Lr}
L_r(\theta,x):=\left\{ (1+r^2)^2+4r^2(1-\sin(\theta-x)\sin(\theta+x)) \right\}\cos x-4r(1+r^2)\cos\theta. 
\end{equation}
 If $\theta\in(\pi/2,\pi)$, then it is clear that $L_r(\theta,x)>0$ for all $x\in (0,\varphi)$ and hence $f_r'(\theta)< 0$. If $\theta\in(0,\pi/2)$, then for all $r\in(0,r_\varphi)$ and $x\in(0,\varphi)$, we have
\begin{align}\label{eq:L}
L_r(\theta,x)&\ge (1+r^2)^2\cos x-4r(1+r^2)\cos\theta\\
&\ge (1+r^2)^2\cos x-4r(1+r^2)\\
&\ge (1+r^2)(\cos x-4r)\\
&\ge (1+r^2) (\cos \varphi -4r)\\
\label{eq:L2}&=4(1+r^2)(r_\varphi-r)>0.
\end{align}
Therefore $L_r(\theta,x)>0$ for all $r\in(0,r_\varphi)$ and $x\in (0,\varphi)$. Thus $f_r'(\theta)<0$ for $\theta\in (0,\pi/2)$ if $0<r<r_\varphi$. The above arguments show that $f_r$ is non-increasing on $(0,\pi)$ if $0<r<r_\varphi$. Since ${\bf p}_{r,0}\circledast\mu$ is symmetric, the above result implies that ${\bf p}_{r,0}\circledast\mu$ is unimodal for $r\in(0,r_\varphi)$.
\end{proof}

Next we prove that the above theorem does not hold for $\varphi=\pi/2$. In fact, the equally weighted Bernoulli distribution $\mathbf b$ on $\{1,-1\}\subset\mathbb{T}$ is a counterexample of the above theorem for $\varphi=\pi/2$.

\begin{theorem}\label{thm:b circledast PK}
The measure $\PK_{r,\psi}\circledast\Ber$ attains a strict maximum at the two points $\pm e^{i\psi}$ and hence is not unimodal at any $r\in(0,1)$.
\end{theorem}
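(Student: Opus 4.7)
The plan is to compute the density of $\PK_{r,\psi}\circledast\Ber$ in closed form, observe that it is a rational function of $\cos^2(\theta-\psi)$, and read off its two strict global maxima directly.

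First I would use the rotational symmetry $\PK_{r,\psi} = \delta_{e^{i\psi}}\circledast \PK_{r,0}$, together with the associativity and commutativity of $\circledast$, to reduce to $\psi=0$; indeed $\PK_{r,\psi}\circledast\Ber$ is just the rotation of $\PK_{r,0}\circledast\Ber$ by angle $\psi$, so the modes $\pm 1$ of the latter become $\pm e^{i\psi}$ of the former, and the unimodality question is unaffected. The density of $\PK_{r,0}\circledast\Ber$ equals the average of two translated Poisson kernels,
\begin{equation*}
g_r(\theta) = \tfrac{1}{2}\bigl[p_{r,0}(\theta)+p_{r,0}(\theta-\pi)\bigr] = \frac{1-r^2}{4\pi}\left(\frac{1}{1-2r\cos\theta+r^2}+\frac{1}{1+2r\cos\theta+r^2}\right),
\end{equation*}
and after combining the two fractions I obtain the more revealing form
\begin{equation*}
g_r(\theta) = \frac{1-r^4}{2\pi\bigl[(1+r^2)^2 - 4r^2\cos^2\theta\bigr]}.
\end{equation*}

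The numerator is a positive constant, while the denominator is positive on $[-\pi,\pi)$ and strictly minimized at exactly the two points $\theta\in\{0,\pi\}$ where $\cos^2\theta=1$ (one checks that the minimum value is $(1-r^2)^2>0$). Hence $g_r$ attains a strict global maximum at precisely $\theta=0$ and $\theta=\pi$, i.e.\ at $\pm 1\in\mathbb T$; rotating back yields the strict maxima at $\pm e^{i\psi}$ claimed in the statement. For the non-unimodality part I would invoke Lemma \ref{lem:basic_unimodal}: choosing any $a$ slightly smaller than the common maximum value $g_r(0)=g_r(\pi)$, the equation $g_r(\theta)=a$ has at least four solutions (two clustering near each of the two strict maxima), which is incompatible with unimodality. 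There is no genuine obstacle here, the only step requiring any care being the algebraic simplification of the sum of the two Poisson kernels.
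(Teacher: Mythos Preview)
Your proof is correct and follows essentially the same strategy as the paper's: reduce to $\psi=0$, write down the explicit density as a sum of two Poisson kernels, and locate the maxima. The only difference is that the paper differentiates the density and factors the numerator to find the critical points, whereas you combine the two fractions into the closed form $g_r(\theta)=\dfrac{1-r^4}{2\pi[(1+r^2)^2-4r^2\cos^2\theta]}$ and read off the maxima directly from the dependence on $\cos^2\theta$; this is a slightly cleaner route but not a genuinely different argument. (Incidentally, your normalization with the extra factor $1/2$ is the correct one; the paper's displayed density integrates to $2$, though this has no bearing on the conclusion.)
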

\begin{proof}
We may assume that $\psi=0$. The density function of ${\bf p}_{r,0}\circledast\Ber$ is computed in the form
\begin{equation}
f_r(\theta)=\frac{1-r^2}{2\pi} \left(\frac{1}{1-2r\cos\theta+r^2}+\frac{1}{1+2r\cos\theta+r^2}\right), \qquad \theta\in [-\pi,\pi). 
\end{equation}
For all $\theta\in(0,\pi)$, we have
\begin{equation}
f_r'(\theta)=\frac{1-r^2}{2\pi} \cdot \frac{-8r^2(1+r^2)\sin(2\theta)}{(1-2r\cos\theta+r^2)^2(1+2r\cos\theta+r^2)^2}.
\end{equation}
This formula easily shows that the function $f_r$ has a global maximum at $\theta=0,\pi$ and a local minimum at $\theta=\pm\pi/2$. Therefore ${\bf p}_{r,\psi}\circledast\Ber$ is not unimodal at any $r\in(0,1)$.
\end{proof}

%%%%%%%%%%%%%%%
%Failure of strong unimodality of $\PK_{r,\psi}$
%%%%%%%%%%%%%%%

\subsection{Failure of strong unimodality of $\PK_{r,\psi}$}\label{sec5.3}
We prove that the Poisson kernel $\PK_{r,\psi}$ is not strongly unimodal for $r \in(0,1)$ sufficiently close to 1.  This is a key fact to prove that the free normal distribution $\lambda_t$ is not freely strongly unimodal for sufficiently small $t>0$ in Section 5.6. We start from defining the concept of classically and freely strong unimodality (we use the concept of freely strong unimodality in Section 5.6 later).

\begin{definition}
A probability measure $\mu$ on $\mathbb{T}$ is said to be {\it strongly unimodal} (resp.\ {\it freely strongly unimodal}) if $\mu \circledast \nu$ (resp.\ $\mu \boxtimes \nu$) is unimodal for every unimodal probability measure $\nu$ on $\mathbb{T}$.
\end{definition}

\begin{theorem}\label{s-unim} There exists $r_0 \in (0,1)$ such that the Poisson kernel $\PK_{r,\psi}$ is not strongly unimodal for any $r \in [r_0,1)$ and any $\psi \in \mathbb R$. 
More strongly, there exists a probability measure $\mu$ and a continuous function $h\colon [r_0,1) \to (0,\infty)$ such that $\limsup_{r\to 1}h(r) \in (0,\infty]$ and for each $r \in [r_0,1)$ and $\psi \in\R$ the equation 
$$
h(r) =\frac{d(\PK_{r,\psi}\circledast \mu)}{d\theta}(\theta)
$$
has at least three solutions $\theta \in [-\pi,\pi)$. 
\end{theorem}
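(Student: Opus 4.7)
The plan is to establish the quantitative ``more strongly'' statement by an explicit construction, then upgrade it to the failure of strong unimodality via a unimodal witness.

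First, I would fix $s \in (0,1)$ and take $\mu := \PK_{s,0} \circledast \Ber$. Using the Poisson semigroup identity $\PK_{r,\psi} \circledast \PK_{s,0} = \PK_{rs,\psi}$ (an easy Fourier-coefficient computation, or a consequence of Proposition~\ref{prop:Multiplicative with Poisson} and the multiplicativity of $\Sigma$-transforms, since $\Sigma_{\PK_{r,\psi}}(z) = e^{-i\psi}/r$), this gives
\[
\PK_{r,\psi} \circledast \mu = \PK_{rs,\psi} \circledast \Ber.
\]
Theorem~\ref{thm:b circledast PK} then yields an explicit density $f_r$ attaining strict global maxima $M(r) = (1+(rs)^2)/(2\pi(1-(rs)^2))$ at $\theta = \psi, \psi+\pi$ and strict global minima $m(r) = (1-(rs)^2)/(2\pi(1+(rs)^2))$ at $\theta = \psi \pm \pi/2$, with strict monotonicity between adjacent extrema.

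Setting $h(r) := \tfrac12(M(r) + m(r))$, which is continuous in $r$ and independent of $\psi$, the intermediate value theorem together with the strict bimodality forces the equation $h(r) = \tfrac{d(\PK_{r,\psi}\circledast\mu)}{d\theta}(\theta)$ to have exactly four, hence at least three, solutions in $[-\pi,\pi)$. Since $rs \to s < 1$ as $r \to 1$, both $M(r)$ and $m(r)$ converge to finite positive limits, so $\limsup_{r\to 1} h(r) \in (0,\infty)$. This establishes the ``more strongly'' statement with any $r_0 \in (0,1)$.

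The principal obstacle is that $\mu$ above is not itself unimodal (it is a symmetric superposition with strict maxima at $\pm 1$), so this construction alone does not yield the failure of strong unimodality. Symmetric mixtures $(1-\alpha)\PK_{s,0} + \alpha \PK_{s,\pi}$ are also ruled out as unimodal witnesses: a short computation shows that the bimodality threshold $\alpha^*(s) = (1-s)^4/((1-s)^4+(1+s)^4)$ is decreasing in $s$, so convolution with $\PK_{r,\psi}$ (which effectively sends $s \mapsto rs$) only \emph{widens} the unimodal range rather than shrinking it. The upgrade therefore requires a non-symmetric unimodal witness $\nu$ exploiting the wrap-around geometry of $\mathbb{T}$---for instance, a unimodal measure supported on an arc of length close to $2\pi$ whose wrap-around smoothing by $\PK_{r,\psi}$ for $r$ close to $1$ produces a strict local maximum on the thin complementary arc in addition to the mode inherited from $\nu$. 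Rigorously verifying (via Lemma~\ref{lem:basic_unimodal}) that this wrap-around bump is a genuine strict local maximum, and hence pinning down the value of $r_0$, is the technical heart of the proof.
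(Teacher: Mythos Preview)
Your Bernoulli-based construction does establish the literal second sentence of the theorem (a probability measure $\mu$, not assumed unimodal, and a function $h$ with the required properties), and the semigroup trick $\PK_{r,\psi}\circledast\PK_{s,0}=\PK_{rs,\psi}$ is a clean shortcut. However, you have identified the real issue yourself and then stopped short of resolving it: the phrase ``more strongly'' only makes sense if the same $\mu$ witnesses the first sentence, i.e.\ if $\mu$ is unimodal. This is also exactly what is needed downstream in Theorem~\ref{main-t3}, where the $\mu$ from this theorem is plugged into Lemma~\ref{lem:unimodal} as a \emph{unimodal} initial distribution. So your proposal, as it stands, proves neither the headline statement nor the version actually used later.

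The closing paragraph of your proposal is only a heuristic; you do not exhibit a unimodal $\nu$, and your ``wrap-around'' picture is not quite the right mechanism. The paper's argument is concrete and different in spirit: it takes the unimodal measure
\[
\mu_a(d\theta)=a\,\delta_0+b\,\mathbf 1_{(0,\pi)}(\theta)\,d\theta,\qquad a+b\pi=1,\quad a^2/(4b^2)<1/27,
\]
computes $f_r'(\theta)$ explicitly on $(0,\pi)$, and shows that its numerator $g_r(\theta)$, evaluated at the endpoint $r=1$, has exactly three simple zeros $0<\alpha_1<\beta_1<\pi$ (by reducing $g_1(\theta)=0$ to an elementary equation in $\cos\theta$). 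The argument principle then pushes these zeros to nearby $\gamma_r,\alpha_r,\beta_r$ for $r$ close to $1$, giving two local maxima and a local minimum of $f_r$; a separate estimate shows the peak near $0$ dominates the one near $\beta_r$, so any level strictly between $f_r(\alpha_r)$ and $f_r(\beta_r)$ is hit at least three times. The mechanism is thus the competition between a sharp atomic peak and a broad plateau, not a wrap-around effect. This explicit construction (and the perturbation-from-$r=1$ step) is the substance you are missing.
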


\begin{figure}[h]
\begin{center}
\begin{minipage}{0.45\hsize}
\begin{center}
\includegraphics[width=60mm,clip]{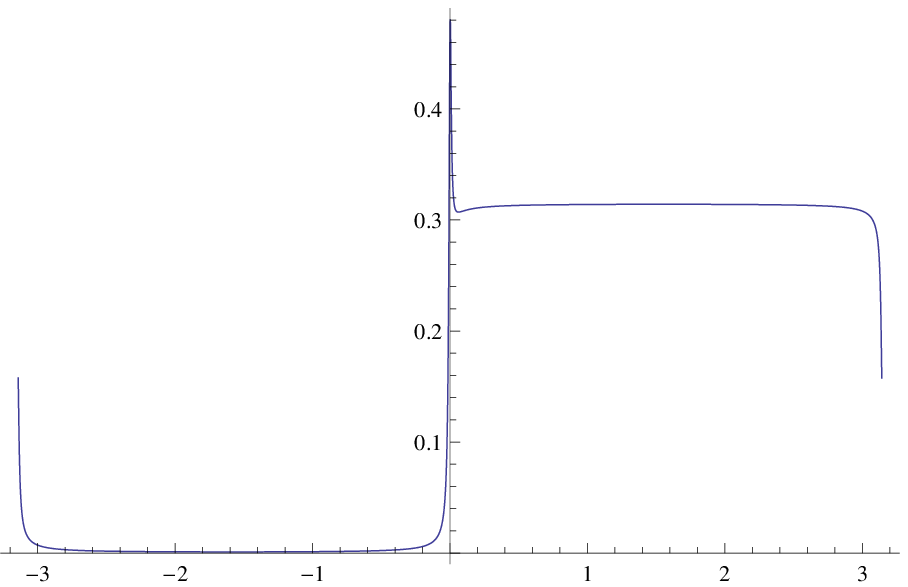}
\end{center}
  \end{minipage}
\begin{minipage}{0.45\hsize}
\begin{center}
\includegraphics[width=65mm,clip]{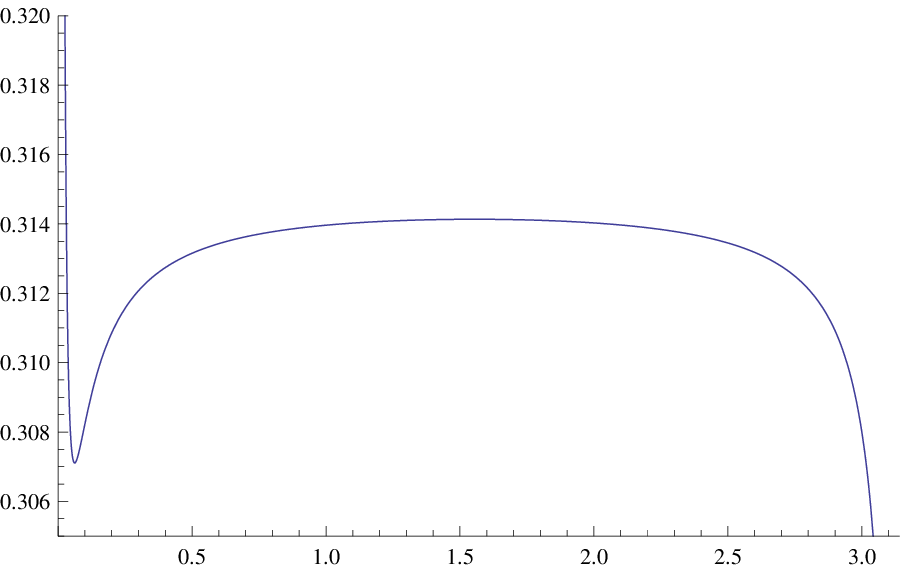}
\end{center}
\end{minipage}
\caption{The density of $\PK_{0.99,0}\circledast \mu_{0.01}$} \label{UFBM1}
\end{center}
\end{figure}

\begin{proof} We may assume that $\psi =0$. Let $\mu=\mu_a$ be the probability measure defined by 
\begin{equation}
\mu_a(d\theta)= a \delta_0 + b {\bf 1}_{(0,\pi)}(\theta) \,d\theta, 
\end{equation}
where $a, b$ are fixed positive real numbers such that $a+b\pi=1$. As we see later, we want $a$ to satisfy $a^2/(4b^2)<1/27$; for example $a=1/20$. Let $f_{r}$ and $p_r$ be the density functions of $\PK_{r,0} \circledast \mu_a$ and $\PK_{r,0}$, respectively. Then we have
\begin{equation}
f_{r}(\theta) =  a p_r(\theta) + b \int_{-\pi}^\pi {\bf 1}_{(0,\pi)}(\theta-\varphi) p_r(\varphi)\,d\varphi = a p_r(\theta) + b\int_{\theta-\pi}^{\theta} p_r(\varphi)\,d\varphi 
\end{equation} 
for all $\theta \in (0,\pi)$. Therefore, we have 
\begin{align}
f_{r}'(\theta) &=  a p_r'(\theta) + b [ p_r(\theta) - p_r(\theta -\pi)] \\
&= \frac{1-r^2}{2\pi} \cdot \frac{g_{r}(\theta)}{(1+r^2 -2r \cos \theta)^2(1+r^2 +2r \cos \theta)} , \qquad \theta \in (0,\pi), 
\end{align} 
where 
\begin{equation}
\begin{split}
g_{r}(\theta) 
&= -2r a \sin \theta (1+r^2 +2r \cos \theta) + b (1+r^2 -2r \cos \theta)(1+r^2 +2r \cos \theta) \\ 
&\quad - b (1+r^2 -2r \cos \theta)^2. 
\end{split}
\end{equation} 
In order to prove that $g_{r}$ has several zeros, we extend the parameter $r$ to the interval $[0,1]$, and then take $r=1$ to simplify $g_{r}$:  
\begin{equation}
g_{1}(\theta) = 4 [-a \sin \theta (1+\cos \theta) +2b \cos \theta (1-\cos \theta)].  
\end{equation}
Note that $g_{1}(0)=0$,  $g'_{1}(0)=-8a<0$ and $g_{1}(\pi)<0$. It is clear that $g_{1} <0$ on $[\pi/2,\pi)$. 
By trigonometry, for $\theta \in (0,\pi/2)$ the inequality $g_{1}(\theta)<0$ is equivalent to 
\begin{equation}
\frac{a^2}{4b^2} > \frac{\cos^2\theta (1-\cos\theta)}{(1+\cos \theta)^3}. 
\end{equation}
The function on the RHS increases on $(0, \pi/3)$ and decreases on $(\pi/3, \pi/2)$, and attains the maximum $1/27$ at $\theta =\pi/3$. 
Thus, if $a^2/(4b^2)<1/27$ (for example $a =1/20$ suffices) then we obtain two solutions $\alpha_{1}, \beta_{1}$ of the equation 
\begin{equation}
\frac{a^2}{4b^2} = \frac{\cos^2\theta (1-\cos\theta)}{(1+\cos \theta)^3},  \qquad \theta \in [0,\pi/2], 
\end{equation}
such that $0<\alpha_{1}< \pi/3 < \beta_{1}<\pi/2$ and $g_{1} >0 $ on $(\alpha_{1},\beta_{1})$ and $g_{1}<0$ on $(0,\alpha_{1}) \cup (\beta_{1}, \pi)$.  Each of the zeros $0,\alpha_{1}, \beta_{1}$ of $g_{1}$ has multiplicity one. 

Since $g_{r}(\theta)$ depends continuously on $r \in [0,1]$ and analytically on $\theta$, by the argument principle, there exists $r_1 \in (0,1)$ such that the function $g_{r}$ still has three zeros $-0.1<\gamma_{r}< \alpha_{r}< \beta_{r}<\pi/2+0.1$ for every $r \in(r_1,1)$.  This implies that $f_{r}$ takes local maxima at $\gamma_{r}$ and $\beta_{r}$ and a local minimum at $\alpha_{r}$. The zeros are continuous functions of $r \in[r_1,1]$. Since $g_{1}'(0)=-8a<0$, $g_{1}(0)=0$ and $g_{r}(0) >0$ for every $r \in [0,1)$, we must have $\gamma_{r}>0$. 

The zero $\beta_{r}$ depends on $r \in [r_1,1]$ continuously, and in particular $\lim_{r\to1}\beta_{r} = \beta_{1} \in (\pi/3,\pi/2)$. Therefore we have $\beta_{r} \in (\pi/3,\pi/2)$ if $r$ is close enough to 1, and hence 
\begin{equation}
b\int_{\beta_{r}-\pi}^{\beta_{r}} p_r(\varphi)\,d\varphi \leq f_{r}(\beta_{r}) \leq a p_r\left(\frac{\pi}{3}\right) + b\int_{\beta_{r}-\pi}^{\beta_{r}} p_r(\varphi)\,d\varphi.  
\end{equation}
Taking the limit $r\to 1$ we conclude that 
\begin{equation}
\lim_{r\to1} f_{r}(\beta_{r}) = b.  
\end{equation}
On the other hand, the local maximum at $\gamma_{r}$ can be estimated from below as 
\begin{equation}
f_{r}(\gamma_{r})\geq f_{r}(0) \geq a p_r(0) = \frac{a(1+r)}{2\pi(1-r)}. 
\end{equation}
Therefore, if $r$ is close enough to $1$ then $f_{r}(\gamma_{r})> f_{r}(\beta_{r})$, and so the equation $f_{r}(\theta) = h$ has at least three solutions $\theta \in (0,\pi)$ for every $h \in (f_{r}(\alpha_{r}), f_{r}(\beta_{r}))$ (see Figure \ref{UFBM1}). Thus 
there exist $r_0 \in (0,1)$ and a continuous function $h\colon [r_0,1)\to (0,\infty)$ such that the equation $f_{r}(\theta) =h(r)$ has at least three solutions $\theta \in (0,\pi)$ for each $r$, and $h(r)\to b$ as $r\to1$; for example we may define $h(r) = \max \{\frac{1}{2}[f_{r}(\alpha_{r})+ f_{r}(\beta_{r})], f_{r}(\beta_{r})-(1-r)\}$.   
\end{proof}

\begin{remark}
Numerical simulations suggest that $\PK_{0.9,0} \circledast \mu_a$ is already unimodal for all $a \in (0,1)$; see Figures \ref{FBM2} and \ref{FBM3}. 
\end{remark}
\begin{figure}[h]
\begin{center}
\begin{minipage}{0.45\hsize}
\begin{center}
\includegraphics[width=65mm,clip]{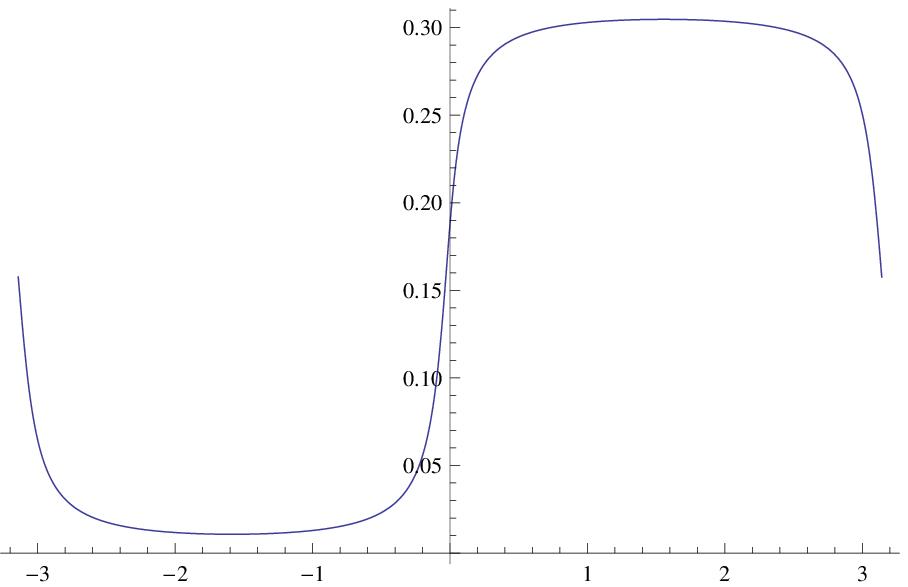}
\end{center}
\caption{The density of $\PK_{0.9,0}\circledast \mu_{0.01}$} \label{FBM2}
  \end{minipage}
\begin{minipage}{0.45\hsize}
\begin{center}
\includegraphics[width=65mm,clip]{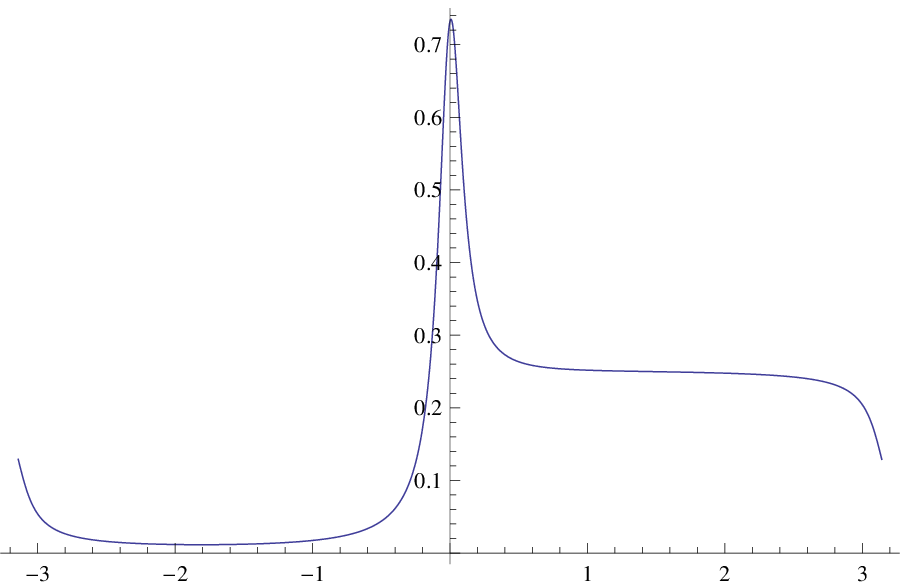}
\end{center}
\caption{The density of $\PK_{0.9,0}\circledast \mu_{0.2}$} \label{FBM3}
\end{minipage}
\end{center}
\end{figure}

We do not know whether $\PK_{r,0}$ is strongly unimodal for $r$ smaller than $r_0$.

\begin{problem}
Does there exist $r \in (0,1)$ such that the Poisson kernel $\PK_{r,0}$ is strongly unimodal? Note that $\PK_{0,0}$ is the normalized Haar measure and the weak limit $\lim_{r\uparrow1}\PK_{r,0}$ is the delta measure $\delta_1$, and hence both are strongly unimodal. 
\end{problem}

%In this section, we studied the classical strong unimodality for the Poisson kernel. Moreover we will study the freely strong unimodality for the free normal distribution in Section 5.6. According to these results, we give the following problem which is classical analogue of the result in Section 5.6.

A more challenging problem is to prove the analogue of Ibragimov's theorem, which characterizes the strongly unimodal distributions on $\R$ by the log concavity of the density (see \cite[Theorem 52.3]{Sat13}; the original article is \cite{I56}).

\begin{problem}
Characterize the classically strong unimodality on $\mathbb T$.  In particular, is the classical normal distribution (= the heat kernel) on $\mathbb T$  strongly unimodal? 
\end{problem}

%%%%%%%%%%%%%%
%Unimodality for $\mu\boxtimes \lambda_t$
%%%%%%%%%%%%%%

\subsection{Unimodality for $\mu\boxtimes \lambda_t$}\label{sec5.4}

In this section, we study unimodality for $\mu\boxtimes \lambda_t$ when $\mu$ is symmetric unimodal. 

\begin{theorem}\label{main-t}
If $\mu$ is a symmetric unimodal probability measure on $\mathbb{T}$, then $\mu \boxtimes \lambda_t$ is symmetric and unimodal for any $t>0$.
\end{theorem}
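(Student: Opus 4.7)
The plan is to reduce the problem to classical multiplicative convolution with Poisson kernels, so that Theorem \ref{thm:classical_symmetric_unimodal} settles the unimodality question and the level-set criterion of Lemma \ref{lem:unimodal} transfers the result back to the free side. The symmetry part is easy: since $\Sigma_{\lambda_t}(\bar z) = \overline{\Sigma_{\lambda_t}(z)}$ and $\lambda_t$ has non-zero mean $e^{-t}$, Proposition \ref{Symm} gives that $\lambda_t$ is symmetric, and then Corollary \ref{Ssym} yields the symmetry of $\mu \boxtimes \lambda_t$.

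For unimodality, by Lemma \ref{lem:unimodal} it suffices to show that for every $r \in (0,1)$ the equation
\[
\frac{1-r^2}{-2\log r}\int_{[-\pi,\pi)} \frac{d\mu(x)}{|1-re^{i(x+\theta)}|^{2}} = \frac{1}{t}
\]
has at most two solutions $\theta \in [-\pi,\pi)$. The key observation is the following identification: using the symmetry of $\mu$ to replace $x$ by $-x$ inside the integral, and comparing with the Poisson kernel density \eqref{eq:PK},
\[
\int_{[-\pi,\pi)} \frac{d\mu(x)}{|1-re^{i(x+\theta)}|^{2}} = \int_{[-\pi,\pi)} \frac{d\mu(x)}{|1-re^{i(\theta-x)}|^{2}} = \frac{2\pi}{1-r^2}\cdot \frac{d(\PK_{r,0}\circledast\mu)}{d\theta}(\theta).
\]
Thus the equation above reduces to the level-set equation
\[
\frac{d(\PK_{r,0}\circledast\mu)}{d\theta}(\theta) = \frac{-\log r}{\pi t},
\]
whose right-hand side is a positive constant depending on $r$ and $t$.

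To finish, I invoke Theorem \ref{thm:classical_symmetric_unimodal}: since both $\mu$ and $\PK_{r,0}$ are symmetric unimodal (the latter by Example \ref{wCauchy}), the convolution $\PK_{r,0}\circledast\mu$ is symmetric unimodal. Its density is real analytic and strictly positive on $\mathbb T$, inherited from the smoothness and strict positivity of the Poisson kernel, so Lemma \ref{lem:basic_unimodal} applies and implies that any positive level set of this density contains at most two points, as required. I do not anticipate any serious obstacle: the hard analytic work has already been absorbed into Lemma \ref{lem:unimodal} and Theorem \ref{thm:classical_symmetric_unimodal}; the only step requiring some care is the identification of the integral in Lemma \ref{lem:unimodal} with a constant multiple of the Poisson-convolution density, which is precisely where the symmetry hypothesis on $\mu$ is essential.
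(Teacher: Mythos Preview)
Your proof is correct and follows essentially the same approach as the paper: symmetry via Corollary \ref{Ssym}, then unimodality via the level-set criterion of Lemma \ref{lem:unimodal}, identifying the integrand with the density of $\PK_{r,0}\circledast\mu$ (using the symmetry of $\mu$), and concluding with Theorem \ref{thm:classical_symmetric_unimodal} together with Lemma \ref{lem:basic_unimodal}. You are slightly more explicit than the paper in justifying why $\lambda_t$ is symmetric and why Lemma \ref{lem:basic_unimodal} applies to the Poisson-convolution density, but the logical structure is identical.
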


\begin{proof}
Since $\mu$ and $\lambda_t$ are symmetric, the probability measure $\mu\boxtimes \lambda_t$ is also symmetric by Corollary \ref{Ssym}. To show the unimodality, it suffices by Lemma \ref{lem:unimodal} to show that for any $r\in(0,1)$ and $t>0$ the equation
\begin{equation}\label{equation:unimodal}
\frac{1-r^2}{2\pi}\int_{[-\pi,\pi)} \frac{1}{|1-re^{i(\theta+x)}|^2}d\mu(x)=\frac{-\log r}{\pi t}
\end{equation}
has at most two solutions $\theta\in [-\pi,\pi)$. Since $\mu$ is symmetric, the LHS of \eqref{equation:unimodal} corresponds to the density function of $\PK_{r,0} \circledast \mu$. By Theorem \ref{thm:classical_symmetric_unimodal}, the measure $\PK_{r,0}\circledast \mu$ is unimodal for any $r\in(0,1)$, and by Lemma \ref{lem:basic_unimodal} the equation \eqref{equation:unimodal} has at most two solutions $\theta\in[-\pi,\pi)$ for any $r\in(0,1)$ and $t>0$ . Hence $\mu\boxtimes \lambda_t$ is unimodal for any $t>0$.
\end{proof}

%%%%%%%%%%%%%%%%%
%Eventual Unimodality and Non-unimodality for $\mu\boxtimes \lambda_t$
%%%%%%%%%%%%%%%%%

\subsection{Eventual unimodality and non-unimodality for $\mu\boxtimes \lambda_t$}\label{sec5.5}
We firstly prepare the following functions to describe the density of $\mu \boxtimes \lambda_t$. For $\theta\in[-\pi,\pi)$ and $r\in(0,1)$ define the following two functions on $[-\pi,\pi)$. 
\begin{align}
\Gamma_{\theta,r}(x):=&\frac{\sin(\theta+x)}{(1-2r\cos(\theta+x)+r^2)^2},\\
\Xi_{\theta,r}(x):=&\frac{2r(1+\log r-r^2+r^2\log r)\cos(\theta+x)+r^4-4r^2\log r-1}{(1-2r\cos(\theta+x)+r^2)^2}.
\end{align}
Those functions will play a key role in analyzing the density of $\mu\boxtimes \lambda_t$ as the following lemma shows. Actually, it serves as an alternative of Lemma \ref{lem:unimodal}; one can choose a convenient one from the two lemmas, according to a problem to be considered. 
\begin{lemma}\label{lem:derivative_v}
For arbitrary numbers $\theta \in [-\pi, \pi)$ and $r\in (0,1)$,  the function $\Xi_{\theta,r}$ takes negative values on $[-\pi,\pi)$. Moreover, for each $t>0$ we have
\begin{equation}
\frac{d}{d\theta}v_{t,\mu}(\theta)=\frac{2v_{t,\mu}(\theta)^2(1-v_{t,\mu}(\theta)^2)\log(v_{t,\mu}(\theta))}{\int_{-\pi}^\pi\Xi_{\theta,v_{t,\mu}(\theta)}(x) \,\mu(dx)} \int_{-\pi}^\pi \Gamma_{\theta,v_{t,\mu}(\theta)}(x)\,\mu(dx), \qquad \theta \in U_{t,\mu}. 
\end{equation}
\end{lemma}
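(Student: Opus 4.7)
The lemma has two parts, which I would treat sequentially because the first (negativity of $\Xi_{\theta,r}$) guarantees the non-vanishing of $\int \Xi_{\theta,v}\,d\mu$, which in turn is the invertibility hypothesis needed to apply the implicit function theorem in the second part.

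For the first part, the denominator of $\Xi_{\theta,r}(x)$ is positive, so I only need to prove that the numerator is negative. Since that numerator is affine in $y:=\cos(\theta+x)\in[-1,1]$, it suffices to check negativity at the two endpoints $y=\pm 1$. Plugging in $y=1$, the numerator simplifies (with $L=\log r$) to
\begin{equation*}
r^4-2r^3+2r-1+2r(1-r)^2 L =(r-1)^3(r+1)+2r(1-r)^2\log r,
\end{equation*}
which is a sum of two quantities each strictly negative for $r\in(0,1)$. Plugging in $y=-1$, the numerator factors as $(r+1)^2\,h(r)$ where $h(r):=r^2-1-2r\log r$; I would verify $h<0$ on $(0,1)$ by noting that $h(1)=0$, $h'(1)=0$, and $h''(r)=2(r-1)/r<0$ on $(0,1)$, so $h'$ is positive and $h$ strictly increases to $0$. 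This gives $\Xi_{\theta,r}<0$ everywhere.

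For the second part, I would apply the implicit function theorem to the rescaled form of the defining equation \eqref{eq:v}, namely $F(\theta,v)=0$ with
\begin{equation*}
F(\theta,v):=(1-v^2)\int_{-\pi}^\pi \frac{\mu(dx)}{1-2v\cos(\theta+x)+v^2}+\frac{2\log v}{t}.
\end{equation*}
A direct differentiation gives $\partial F/\partial\theta=-2v(1-v^2)\int \Gamma_{\theta,v}\,d\mu$, which is the clean factor that appears in the numerator of the claimed formula. The key identity to verify is
\begin{equation*}
v(\log v)\,\frac{\partial F}{\partial v}(\theta,v)=\int_{-\pi}^\pi\Xi_{\theta,v}(x)\,\mu(dx),
\end{equation*}
after which the implicit function theorem (applicable because Part 1 gives $\int\Xi_{\theta,v}\,d\mu<0$) yields $dv/d\theta=-(\partial F/\partial\theta)/(\partial F/\partial v)$, which rearranges to exactly the stated expression.

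The main obstacle is the algebraic verification of this key identity. The derivative $\partial F/\partial v$ has three contributions (from $1-v^2$, from the integrand, and from $2\log v/t$). After using $F=0$ to replace the free term $2\log v/t$ by $-(1-v^2)\int d\mu/(1-2v\cos(\theta+x)+v^2)$, the expression becomes a single integral whose integrand is
\begin{equation*}
\frac{-[2v^2\log v+1-v^2](1-2vc+v^2)+2v(\log v)(1-v^2)(c-v)}{(1-2vc+v^2)^2},
\end{equation*}
with $c=\cos(\theta+x)$, multiplied by $(v\log v)^{-1}$. Expanding this polynomial numerator in $c$ and collecting terms should reproduce the numerator $2v(1+\log v-v^2+v^2\log v)c+v^4-4v^2\log v-1$ defining $\Xi_{\theta,v}$; this is a routine but slightly tedious computation and is the only place where careful bookkeeping is required.
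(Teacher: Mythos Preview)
Your proof is correct and follows essentially the same approach as the paper: show the numerator of $\Xi_{\theta,r}$ is negative by reducing to an endpoint check in $\cos(\theta+x)$, then apply the implicit function theorem to the defining relation for $v_{t,\mu}$. The only cosmetic difference is that the paper works with $R(\theta,r)=\frac{1-r^2}{-2\log r}\int\frac{d\mu}{|1-re^{i(\theta+x)}|^2}-\frac1t$ instead of your $F=(-2\log v)R$; since the $-1/t$ in $R$ drops under $\partial/\partial r$, the paper obtains $\partial R/\partial r=-\frac{1}{2r(\log r)^2}\int\Xi_{\theta,r}\,d\mu$ as an identity valid for \emph{all} $(\theta,r)$, whereas your key identity $v(\log v)\,\partial F/\partial v=\int\Xi_{\theta,v}\,d\mu$ holds only along the curve $F=0$ (which is all that is needed).
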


\begin{proof} 
By calculus we can check that $1+\log r-r^2+r^2\log r<0$ for all $r\in (0,1)$. Hence we have
\begin{align}
&2r(1+\log r-r^2+r^2\log r)\cos(\theta+x)+r^4-4r^2\log r-1 \\
&\qquad \le -2r(1+\log r-r^2+r^2\log r)+r^4-4r^2\log r-1 \\
&\qquad = (1+r)^2 (-2r \log r +r^2 -1). 
\end{align}
By calculus we can prove that the RHS is negative, and so is $\Xi_{\theta,r}(x)$. 

Recall that 
the function $v_{t,\mu}$ is analytic in $U_{t,\mu}$ and satisfies $v_{t,\mu}(\theta) \in (0,1)$ and the equation $R(\theta,v_{t,\mu}(\theta))=0$  for all $\theta \in U_{t,\mu}$, where 
\begin{equation}\label{eq2}
R(\theta,r):=\frac{1-r^2}{-2\log r} \int_{-\pi}^\pi \frac{\mu(dx)}{1-2 r\cos(\theta+x)+r^2}-\frac{1}{t},\qquad r\in(0,1), \theta \in U_{t,\mu}. 
\end{equation}
In order to compute $\frac{d}{d\theta} v_{t,\mu}(\theta)$, we calculate partial derivatives of $R$ which can be expressed with the functions $\Gamma_{\theta,r}$ and $\Xi_{\theta,r}$ as follows:
\begin{equation}
\begin{split}
\frac{\partial R}{\partial \theta}(\theta,r)=&\frac{r(1-r^2)}{\log r}\int_{-\pi}^\pi \Gamma_{\theta,r}(x)\,\mu(dx),
\end{split}
\end{equation}
and
\begin{equation}
\begin{split}
\frac{\partial R}{\partial r}(\theta,r)= -\frac{1}{2r(\log r)^2} \int_{-\pi}^\pi \Xi_{\theta,r}(x)\,\mu(dx). 
\end{split}
\end{equation}
Since $\frac{\partial R}{\partial r}(\theta,r)>0$, we can use the differentiation of implicit functions to obtain the desired expression of $v_{t,\mu}'$. 
\end{proof}

\begin{lemma} \label{lem:estimate_v}
Let $t>4$. Let $r_t \in (0,1)$ be the unique solution $r$ to the equation
$$
\frac{1+r}{1-r} (-2\log r) = t. 
$$  
Then $v_{t,\mu}(\theta) \leq r_t$ for every $\theta \in [-\pi,\pi)$ and every probability measure $\mu$ on $\mathbb T$.  In particular, $U_{t,\mu}=\mathbb T$ and $\lambda_t \boxtimes \mu$ has a strictly positive real analytic density on $\mathbb T$. 
\end{lemma}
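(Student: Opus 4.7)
The plan is to combine the monotonicity of the defining equation for $v_{t,\mu}$ in $r$, already established in Lemma \ref{lem:derivative_v}, with a universal lower bound that is independent of $\theta$ and $\mu$.

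First, I would analyze the function $g(r) := \frac{1+r}{1-r}(-2\log r)$ appearing in the definition of $r_t$. A Taylor expansion at $r=1$ gives $g(r) = 4 + \frac{1}{3}(1-r)^2 + O((1-r)^3)$, so $g(1^-) = 4$, while clearly $g(0^+) = +\infty$. Strict monotonicity of $g$ on $(0,1)$ reduces, via logarithmic differentiation, to the inequality
\begin{equation*}
N(r) := 1 - r^2 + 2r \log r > 0, \qquad r \in (0,1).
\end{equation*}
Since $N(1) = N'(1) = N''(1) = 0$ and $N'''(r) = -2/r^2 < 0$, one verifies step by step that $N'' > 0$, then $N' < 0$, then $N > 0$ on $(0,1)$. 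Thus $g$ is strictly decreasing from $+\infty$ to $4$, and for each $t > 4$ the equation $g(r) = t$ has a unique solution $r_t \in (0,1)$.

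Next, I would introduce
\begin{equation*}
F(r, \theta, \mu) := \frac{1-r^2}{-2\log r}\int_{-\pi}^\pi \frac{\mu(dx)}{|1 - re^{i(\theta+x)}|^2},
\end{equation*}
which is $R(\theta, r) + 1/t$ in the notation of \eqref{eq2}. The elementary estimate $|1 - re^{i\phi}|^2 \leq (1+r)^2$ yields, after integrating against $\mu$, the universal lower bound
\begin{equation*}
F(r, \theta, \mu) \geq \frac{1-r^2}{(1+r)^2(-2\log r)} = \frac{1}{g(r)},
\end{equation*}
and in particular $F(r_t, \theta, \mu) \geq 1/g(r_t) = 1/t$. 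Lemma \ref{lem:derivative_v} (through the negativity of $\Xi_{\theta, r}$) gives $\partial R/\partial r > 0$, so $r \mapsto F(r, \theta, \mu)$ is strictly increasing on $(0,1)$. Hence any $r \in (0,1)$ with $F(r, \theta, \mu) < 1/t$ must satisfy $r < r_t$, and taking the supremum in the definition of $v_{t,\mu}(\theta)$ yields $v_{t,\mu}(\theta) \leq r_t < 1$ for every $\theta \in [-\pi,\pi)$ and every probability measure $\mu$ on $\mathbb T$.

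The remaining conclusions are immediate from the material recalled in Section \ref{sec2}: the characterization $U_{t,\mu} = \{\theta : v_{t,\mu}(\theta) \neq 1\}$ gives $U_{t,\mu} = \mathbb T$, and Zhong's density formula \eqref{eq:density} then yields
\begin{equation*}
p_t(\overline{\Psi_{t,\mu}(e^{i\theta})}) = -\frac{\log v_{t,\mu}(\theta)}{\pi t} \geq -\frac{\log r_t}{\pi t} > 0,
\end{equation*}
which is real analytic on $\mathbb T$ because $v_{t,\mu}$ is analytic on $U_{t,\mu} = \mathbb T$ and $\Psi_{t,\mu}$ is a homeomorphism. The main obstacle is the mildly tedious calculus showing that $g$ is strictly decreasing on $(0,1)$; all other steps are short consequences of results already stated in the excerpt.
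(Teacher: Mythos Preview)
Your proof is correct and follows essentially the same approach as the paper: both hinge on the universal lower bound $F(r,\theta,\mu)\ge \frac{1-r}{(1+r)(-2\log r)}$ coming from $|1-re^{i\phi}|\le 1+r$, and on the monotonicity in $r$ that pins down the unique $r_t$. The only cosmetic differences are that the paper works with $f=1/g$ (increasing from $0$ to $1/4$) and concludes $v_{t,\mu}(\theta)\le r_t$ directly from the monotonicity of the lower bound $f$, whereas you invoke the strict monotonicity of $F$ itself via Lemma~\ref{lem:derivative_v}; both routes are valid and equally short.
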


\begin{remark}
The opposite inequality of the form $v_{t,\mu}(\theta) \geq a_t$ was obtained in \cite[Proposition 3.10]{Z15} in a similar way. 
\end{remark}

\begin{proof} By the triangle inequality we obtain 
\begin{equation}\label{eq:inequality_v}
\frac{1-r^2}{-2\log r} \int_{\mathbb T} \frac{d\mu(\xi)}{|1-re^{i\theta} \xi|^2} \geq \frac{1-r^2}{-2\log r} \cdot \frac{1}{(1+r)^2} =  \frac{1-r}{(1+r)(-2\log r)} =: f(r). 
\end{equation}
Elementary calculus shows that $f$ is strictly increasing on $(0,1)$ and $f(+0)=0$ and $f(1-0) = 1/4$. Therefore, for every $t >4$ the unique solution $r_t$ exists. Since $f(r_t)=1/t$, the LHS of \eqref{eq:inequality_v} is greater than or equal to $1/t$ at $r=r_t$, which implies that $v_{t,\mu}(\theta) \leq r_t$. Since $v_{t,\mu}(\theta)<1$ the last statement holds. 
\end{proof}

We give a class of probability distributions $\mu$ on $\mathbb{T}$ such that the distribution $\mu\boxtimes \lambda_t$ is unimodal for sufficiently large time. In fact the class considered in Theorem \ref{thm:eventual unimodality of PK} is available. 

\begin{theorem}\label{main-t1}
Let $\varphi \in (0,\pi/2)$ and $\mu$ be a symmetric probability measure on $\mathbb T$ such that $\supp(\mu) \subset \{e^{i\theta}: \theta \in [-\varphi,\varphi]\}$. Then $\mu \boxtimes\lambda_t$ is unimodal for all $t\geq \frac{2(1+r_\varphi)}{1-r_\varphi} \log \frac1{r_\varphi}$, where $r_\varphi = (\cos\varphi)/4$. 
\end{theorem}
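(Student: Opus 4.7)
The plan is to apply Lemma \ref{lem:unimodal}: we must show that for every $r\in(0,1)$ the equation
\begin{equation}\label{eq:plan_target}
F_r(\theta):=\frac{1-r^2}{-2\log r}\int_{-\pi}^{\pi}\frac{\mu(dx)}{|1-re^{i(x+\theta)}|^2}=\frac{1}{t}
\end{equation}
has at most two solutions $\theta\in[-\pi,\pi)$. The crucial observation is that, because $\mu$ is symmetric, the substitution $x\mapsto -x$ (which preserves $\mu$) turns the left-hand side into $-\pi/\log r$ times the density of $\PK_{r,0}\circledast\mu$ at $\theta$. This opens the door to combining Theorem \ref{thm:eventual unimodality of PK} (which handles small $r$) with Lemma \ref{lem:estimate_v} (which forces $v_{t,\mu}$ to take only small values when $t$ is large), and the strategy is to split the analysis of \eqref{eq:plan_target} at the threshold $r=r_\varphi$.

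For $r\le r_\varphi$, Theorem \ref{thm:eventual unimodality of PK} gives that $\PK_{r,0}\circledast\mu$ is unimodal when $r<r_\varphi$; the borderline value $r=r_\varphi$ is then captured either by a weak-limit argument via Lemma \ref{lem:FHS18}, or by noting that the inequality chain \eqref{eq:L}--\eqref{eq:L2} already degenerates to the non-strict inequality $L_{r_\varphi}(\theta,x)\ge 0$, still sufficient for monotonicity of the density. Since the density of $\PK_{r,0}\circledast\mu$ is real analytic on $\mathbb T$ as a Poisson integral, Lemma \ref{lem:basic_unimodal} directly yields that \eqref{eq:plan_target} has at most two solutions.

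For $r>r_\varphi$, the aim is the sharper statement that \eqref{eq:plan_target} has \emph{no} solutions. Rewriting the hypothesis $t\ge \frac{2(1+r_\varphi)}{1-r_\varphi}\log\frac{1}{r_\varphi}$ as $1/t\le f(r_\varphi)$, with $f(\rho)=\frac{1-\rho}{(1+\rho)(-2\log\rho)}$ the strictly increasing function from the proof of Lemma \ref{lem:estimate_v}, produces $r_t\le r_\varphi$, whence Lemma \ref{lem:estimate_v} yields $v_{t,\mu}(\theta)\le r_t\le r_\varphi<r$ for every $\theta$ (in particular $U_{t,\mu}=\mathbb T$). The last ingredient is the strict monotonicity of $r\mapsto F_r(\theta)$ on $(0,1)$ for every fixed $\theta$, which is precisely the positivity of $\partial R/\partial r$ computed in the proof of Lemma \ref{lem:derivative_v} (a consequence of the negativity of $\Xi_{\theta,r}$). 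Since $F_{v_{t,\mu}(\theta)}(\theta)=1/t$ by the defining equation \eqref{eq:v}, we conclude $F_r(\theta)>1/t$ for every $\theta$, as claimed.

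Combining the two regimes verifies the hypothesis of Lemma \ref{lem:unimodal}, completing the proof. The main obstacle is really the bookkeeping at the boundary $r=r_\varphi$ together with the careful use of strict monotonicity of $F_r$ in $r$; beyond Theorem \ref{thm:eventual unimodality of PK} and Lemma \ref{lem:estimate_v} no genuinely new estimates are needed.
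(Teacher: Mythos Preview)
Your proof is correct, but it proceeds along a different organizational route from the paper's. The paper argues via Lemma \ref{lem:derivative_v}: it computes $\frac{d}{d\theta}v_{t,\mu}(\theta)$ directly and, using the estimate \eqref{eq:L}--\eqref{eq:L2} at the single ``dynamic'' value $r=v_{t,\mu}(\theta)$ (which lies in $(0,r_\varphi]$ by Lemma \ref{lem:estimate_v}), shows that $v_{t,\mu}$ is non-decreasing on $(0,\pi)$; the density formula \eqref{eq:density} then gives unimodality without any case split in $r$. You instead invoke the level-set criterion of Lemma \ref{lem:unimodal} and split at $r=r_\varphi$: for $r\le r_\varphi$ you recycle Theorem \ref{thm:eventual unimodality of PK} as a black box via Lemma \ref{lem:basic_unimodal}, while for $r>r_\varphi$ you combine Lemma \ref{lem:estimate_v} with the strict $r$-monotonicity of $F_r(\theta)$ (i.e.\ $\partial R/\partial r>0$ from Lemma \ref{lem:derivative_v}) to exclude solutions altogether. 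Your approach is more modular and makes the passage from the classical statement (Theorem \ref{thm:eventual unimodality of PK}) to the free one fully explicit---mirroring, in fact, the strategy the paper itself uses for Theorem \ref{main-t}; the paper's direct derivative computation buys the avoidance of the case split and of the boundary treatment at $r=r_\varphi$, at the price of essentially repeating the computation \eqref{eq:L}--\eqref{eq:L2} inside the proof.
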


\begin{proof} 
Let $t>4$. For $\theta\in \mathbb T=U_{t,\mu}$, we have
\begin{align}
\int_{-\pi}^\pi &\Gamma_{\theta,v_{t,\mu}(\theta)}(x) \,d\mu(x)=\int_{0}^\varphi \left[\Gamma_{\theta,v_{t,\mu}(\theta)}(x)+ \Gamma_{\theta,v_{t,\mu}(\theta)}(-x) \right]d\mu(x) \\
&=2\sin\theta\int_{0}^\varphi  \frac{ L_{v_{t,\mu}(\theta)} (\theta,x)\,d\mu(x)}{(1-2v_{t,\mu}(\theta)\cos(\theta+x)+v_{t,\mu}(\theta)^2)^2(1-2v_{t,\mu}(\theta)\cos(\theta-x)+v_{t,\mu}(\theta)^2)^2},
\end{align}
where
%\begin{equation}
%\begin{split}
%F(\theta,x):=&\sin(\theta+x)(1-2v_{t,\mu}(\theta)\cos(\theta-x)+v_{t,\mu}(\theta)^2)^2\\
%&+\sin(\theta-x)(1-2v_{t,\mu}(\theta)\cos(\theta+x)+v_{t,\mu}(\theta)^2)^2\\
%&\hspace{-4.5mm}=2 L(\theta,x)\sin\theta
%\end{split}
%\end{equation}
the function $L_r(\theta,x)$ was defined in \eqref{eq:Lr}. For $\theta\in [0,\pi]$,  we have 
\begin{equation}
\begin{split}
L_{v_{t,\mu}(\theta)}(\theta,x)
%&\ge \cos x(1+v_{t,\mu}(\theta)^2)^2-4v_{t,\mu}(\theta)(1+v_{t,\mu}(\theta)^2)\cos\theta\\
%&\ge \cos x (1+v_{t,\mu}(\theta)^2)-4v_{t,\mu}(\theta)(1+v_{t,\mu}(\theta)^2)\cos\theta\\
%&\ge \cos x (1+v_{t,\mu}(\theta)^2)-4v_{t,\mu}(\theta)(1+v_{t,\mu}(\theta)^2)\\
\ge  4(1+v_{t,\mu}(\theta)^2)(r_\varphi-v_{t,\mu}(\theta)).
\end{split}
\end{equation}
by the same calculations from \eqref{eq:L} to \eqref{eq:L2}. Suppose that $r_\varphi \geq  v_{t,\mu}(\theta)$ holds for all $\theta \in [0,\pi]$. We can then conclude $L_{v_{t,\mu}(\theta)}(\theta,x) \geq 0$ for all $\theta \in [0,\pi]$ and $x \in [0,\varphi]$, and hence $\int_{-\pi}^\pi \Gamma_{\theta, v_{t,\mu}(\theta)}(x) \,d\mu(x)\geq 0$. By Lemma \ref{lem:derivative_v}, we have $\frac{d}{d\theta}v_{t,\mu}(\theta)\geq  0$ for $\theta\in[0,\pi]$, and therefore $\frac{d}{d\theta}v_{t,\mu}(\theta)\leq  0$ for $\theta\in[-\pi,0]$ since $\mu$ is symmetric. Thus $v_{t,\mu}$ is non-increasing in $(-\pi,0)$ and non-decreasing in $(0,\pi)$. Therefore the density function $p_t(\overline{\psi_t(e^{i\theta})})=-\frac{\log(v_{t,\mu}(\theta))}{\pi t}$ of $\mu \boxtimes \lambda_t$ is non-decreasing in $(-\pi,0)$ and non-increasing in $(0,\pi)$, so that $\mu \boxtimes \lambda_t$ is unimodal. 

Next we prove $r_\varphi\geq v_{t,\mu}(\theta)$ for large $t$ by using Lemma \ref{lem:estimate_v}. Let $t_\varphi:= \frac{1+r_\varphi}{1-r_\varphi} (-2\log r_\varphi)$, which is greater than $4$.  For every $t \geq t_\varphi$ we have $r_t \leq r_\varphi$ since $t\mapsto r_t$ is strictly decreasing,  and hence $r_\varphi\geq v_{t,\mu}(\theta)$.  
\end{proof}

Next we prove that the above theorem does not hold for $\varphi=\pi/2$. This result is a complete free analogue of Theorem \ref{thm:b circledast PK} on the classical multiplicative convolution of the Poisson kernel $\PK_{r,\psi}$ and the Bernoulli distribution.
\begin{theorem}\label{main-t2}
The measure $\Ber \boxtimes \lambda_t$ attains a strict maximum at $\pm1 \in \mathbb T$ and hence is not unimodal at any time $t>0$.
\end{theorem}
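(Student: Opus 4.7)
My plan is to apply Lemma \ref{lem:derivative_v} directly to $\mu = \Ber$, exploiting that $\Ber = \tfrac{1}{2}(\delta_1 + \delta_{-1})$ reduces every integral against $\Ber$ to a sum of two explicit terms.

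First I would compute, using the identity $(a+b)^2 - (a-b)^2 = 4ab$ with $a = 1+r^2$ and $b = 2r\cos\theta$,
\begin{equation*}
\int_{-\pi}^\pi \Gamma_{\theta,r}(x)\,d\Ber(x) = \tfrac{1}{2}\bigl[\Gamma_{\theta,r}(0) + \Gamma_{\theta,r}(\pi)\bigr] = \frac{2r(1+r^2)\sin(2\theta)}{(1 - 2r\cos\theta + r^2)^2 (1 + 2r\cos\theta + r^2)^2}.
\end{equation*}
Since $\log v_{t,\Ber}(\theta) < 0$ and $\int \Xi_{\theta, v_{t,\Ber}(\theta)}\,d\Ber < 0$ by the first conclusion of Lemma \ref{lem:derivative_v}, the differential formula in that lemma shows that on $U_{t,\Ber}$ the sign of $v_{t,\Ber}'(\theta)$ coincides with the sign of $\sin(2\theta)$.

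Next I would verify that $\theta=0$ and $\theta=\pi$ both lie in $U_{t,\Ber}$: at these angles the kernel $|1-re^{i(x+\theta)}|^{-2}$ diverges as $r\uparrow 1$ at the matching atom of $\Ber$, so the defining integral for $U_{t,\Ber}$ blows up and hence $v_{t,\Ber}(0),\,v_{t,\Ber}(\pi)\in(0,1)$, with a neighborhood of each point contained in $U_{t,\Ber}$. On these neighborhoods the sign analysis shows $v_{t,\Ber}$ is strictly decreasing immediately to the left of each of $0,\pi$ and strictly increasing immediately to the right, so $v_{t,\Ber}$ attains strict local minima at $\theta = 0$ and $\theta = \pi$.

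Finally, since $p_t(\overline{\Psi_{t,\Ber}(e^{i\theta})}) = -\log(v_{t,\Ber}(\theta))/(\pi t)$ is a strictly decreasing function of $v_{t,\Ber}(\theta)\in(0,1)$, the density $p_t$ has strict local maxima at $\overline{\Psi_{t,\Ber}(1)}$ and $\overline{\Psi_{t,\Ber}(-1)}$. I would then identify these points: for $\theta\in\{0,\pi\}$ and $x\in\{0,\pi\}$ one has $\sin(\theta+x)=0$, so the integral in the explicit formula for $\Psi_{t,\Ber}(e^{i\theta})$ vanishes, giving $\Psi_{t,\Ber}(\pm1)=\pm1$. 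Hence $p_t$ attains strict local maxima at two distinct points $\pm1\in\mathbb T$, which is incompatible with unimodality. I do not anticipate any real obstacle; the only point needing care is the membership of $0$ and $\pi$ in $U_{t,\Ber}$ so that Lemma \ref{lem:derivative_v} is available there, and this follows from the blow-up of the kernel noted above.
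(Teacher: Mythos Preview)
Your proposal is correct and follows essentially the same route as the paper: compute $\int \Gamma_{\theta,r}\,d\Ber$ explicitly, use Lemma~\ref{lem:derivative_v} to read off the sign of $v_{t,\Ber}'$ from $\sin(2\theta)$, and deduce strict local minima of $v_{t,\Ber}$ at $0$ and $\pi$. The paper additionally computes $U_{t,\Ber}$ explicitly (yielding the side information that the support has two components for $0<t<2$), while you instead argue that $0,\pi\in U_{t,\Ber}$ via the blow-up of the kernel and then explicitly verify $\Psi_{t,\Ber}(\pm1)=\pm1$, a step the paper leaves implicit.
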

\begin{proof}
By the definition of $U_{t,\Ber}$, we have
\begin{equation}
\begin{split}
U_{t,\Ber}&=\left\{ -\pi\le \theta < \pi: -\sqrt{\frac{t}{2}}<\sin\theta<\sqrt{\frac{t}{2}}\right\}.  
\end{split}
\end{equation}
Therefore, $U_{t,\Ber}$ and hence the support of $\Ber \boxtimes \lambda_t$ have two connected components if $0<t<2$, and has the single connected component $\mathbb T$ if $t\geq2$. The density function of $\Ber \boxtimes \lambda_t$ is symmetric for the $x$-axis since $\Ber$ is symmetric.  For $\theta \in U_{t,\Ber}$ we have 
\begin{align}
\int_{[-\pi,\pi)} \Gamma_{\theta,v_{t,\Ber}(\theta)}(x)\,\Ber(dx) 
&= \frac{1}{2} \left(\frac{\sin\theta}{(1-2r \cos \theta +r^2)^2} +\frac{-\sin\theta}{(1+2r \cos \theta +r^2)^2} \right)  \\
&= \frac{2r(1+r^2)\sin(2\theta)}{(1-2r \cos \theta +r^2)^2(1+2r \cos \theta +r^2)^2}. 
\end{align}
By Lemma \ref{lem:derivative_v} we conclude that $\frac{d}{d\theta}v_{t,\Ber}(\theta)>0$ on $(0,\frac{\pi}{2})\cap  U_{t,\Ber}$ and $\frac{d}{d\theta}v_{t,\Ber}(\theta)<0$ on $(\frac{\pi}{2},\pi)\cap U_{t,\Ber}$. Since $v_{t,\Ber}(\theta)=v_{t,\Ber}(-\theta)$, the function $v_{t,\Ber}$ has local minima at $\theta=0,\pi$. By the density formula $p_t(\overline{\Psi_{t,\Ber}(e^{i\theta})})=-\frac{\log(v_{t,\Ber}(\theta))}{\pi t}$,  the desired statement holds for any $t>0$.
\end{proof}

\begin{remark}
We have concluded in \cite{HU18} that the free additive convolution of the semicircle distribution $S(0,t)$ and a compactly supported probability measure on $\mathbb{R}$ is unimodal for sufficiently large $t>0$, but a similar statement is not true in the case of the free multiplicative convolution, according to Theorem \ref{main-t2}.
\end{remark}

%%%%%%%%%%%%%%%%%%%%%%

\subsection{Failure of freely strong unimodality of $\lambda_t$}\label{sec5.6}
In this section, combining Lemma \ref{lem:unimodal} and Theorem \ref{s-unim} we conclude the failure of freely strong unimodality for $\lambda_t$ for small $t>0$. 

\begin{theorem}\label{main-t3}
There exists some $t_0>0$ such that the free normal distribution $\lambda_{t}$ is not freely strongly unimodal for any $t \in (0,t_0)$. 
\end{theorem}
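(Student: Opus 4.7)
The proof proceeds by combining Lemma \ref{lem:unimodal} with the explicit failure of classical strong unimodality established in Theorem \ref{s-unim}. The central observation is that the integral in condition \eqref{cond-unim2} of Lemma \ref{lem:unimodal} equals, up to the factor $2\pi/(1-r^2)$, the density at $\theta$ of the classical convolution $\PK_{r,0}\circledast\check\mu$, where $\check\mu(B):=\mu(-B)$ denotes the reflection of $\mu$ (one verifies this by the substitution $y=-x$ inside the integrand, using that $|1-re^{i(\theta+x)}|^2=|1-re^{i(\theta-(-x))}|^2$). Hence equation \eqref{sol1} rewrites as
\begin{equation*}
\frac{d(\PK_{r,0}\circledast\check\mu)}{d\theta}(\theta)=\frac{-\log r}{\pi t},
\end{equation*}
and $\mu\boxtimes\lambda_t$ is unimodal iff, for every $r\in(0,1)$, this density attains no positive value more than twice on $\mathbb{T}$.

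Next I would invoke Theorem \ref{s-unim} to fix the unimodal probability measure $\mu_a$ constructed in its proof, a threshold $r_0\in(0,1)$, and a continuous function $h\colon[r_0,1)\to(0,\infty)$ with $\limsup_{r\to 1}h(r)\in(0,\infty]$, for which the density of $\PK_{r,0}\circledast\mu_a$ attains the value $h(r)$ at least three times for every $r\in[r_0,1)$. Set $\nu:=\check\mu_a$, which is still unimodal since the reflection $\theta\mapsto-\theta$ preserves unimodality on $\mathbb{T}$. Defining
\begin{equation*}
t(r):=\frac{-\log r}{\pi h(r)},\qquad r\in[r_0,1),
\end{equation*}
the dictionary above applied to $\mu=\nu$ and $t=t(r)$ yields at least three solutions $\theta$ of the resulting level-set equation. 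Lemma \ref{lem:unimodal} then forces $\nu\boxtimes\lambda_{t(r)}$ to be non-unimodal, so $\lambda_{t(r)}$ is not freely strongly unimodal for every $r\in[r_0,1)$.

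To finish, I would show that $t([r_0,1))$ contains an interval of the form $(0,t_0]$. The function $t(\cdot)$ is continuous and strictly positive on $[r_0,1)$. Because $\limsup_{r\to 1}h(r)>0$, one may pick a sequence $r_n\to 1$ along which $h(r_n)$ is bounded below by a positive constant; then $t(r_n)\to 0$. The intermediate value theorem applied to $t(\cdot)$ on $[r_0,r_n]$ captures every value in $[t(r_n),t(r_0)]$, and letting $n\to\infty$ gives $(0,t(r_0)]\subset t([r_0,1))$. One may thus take $t_0:=t(r_0)$.

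The only mildly technical step is the reflection bookkeeping between condition \eqref{cond-unim2} of Lemma \ref{lem:unimodal} and the classical Poisson-kernel density; once this dictionary is set up, Theorem \ref{s-unim} supplies the non-unimodal level sets and the final conclusion is a standard intermediate-value argument.
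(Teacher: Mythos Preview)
Your proof is correct and follows essentially the same route as the paper: invoke Theorem~\ref{s-unim} to produce a unimodal measure and a continuous level function $h$, translate the level-set condition into equation~\eqref{sol1} of Lemma~\ref{lem:unimodal}, and use the intermediate value theorem to cover an interval $(0,t_0]$ of times. Your treatment is in fact slightly more careful than the paper's: you make the reflection $\check\mu$ explicit (the paper silently absorbs it, which is harmless since reflection preserves unimodality and the solution count), and you spell out why $\limsup_{r\to1}h(r)>0$ is what makes the IVT work, whereas the paper invokes IVT in one line.
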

\begin{proof} In order to prove that $\lambda_t$ is not freely strongly unimodal,  according to Lemma \ref{lem:unimodal} we need to find a unimodal distribution $\mu$ and some $r>0$ such that the equation 
\begin{equation}\label{eq:key}
 \frac{d(\PK_{r,0}\circledast\mu)}{d\theta}(\theta)=\frac{-\log r}{\pi t}
\end{equation}
has at least three solutions $\theta \in[-\pi,\pi)$. We take the probability measure $\mu$ and the function $h\colon [r_0,1)\to(0,\infty)$ in Theorem \ref{s-unim}. Let $t_0:=(-\log r_0)/(\pi h(r_0)).$ For each $t \in (0,t_0)$, by the intermediate value theorem there exists some $r_t\in[r_0,1)$ such that $-\log r_t/(\pi t) = h(r_t)$. 
For $r=r_t$ equation \eqref{eq:key} has at least three solutions $\theta \in[-\pi,\pi)$ by Theorem \ref{s-unim}. 
\end{proof}

\begin{problem}
In the case of additive free convolution, once we establish that $S(0,t)$ is not freely strongly unimodal at some $t>0$ then nor is it at any $t>0$, because $t>0$ is just a scaling. However, on the unit circle the parameter $t>0$ for $\lambda_t$ is not a scaling. Thus the following problem remains unsolved: does there exist $t >0$ such that the free normal distribution $\lambda_t$ is freely strongly unimodal? 
\end{problem}

We conclude this paper by mentioning some relationship between additive and multiplicative free convolutions. 
Our method of studying unimodality for free multiplicative convolution has been quite similar to the case of free additive convolution in \cite{HU18}. In particular, the study of the unimodality of $\lambda_t \boxtimes \mu$ was reduced to that of $\PK_{r,0}\circledast \mu$ in this paper, while in \cite[Lemma 3.1]{HU18} the study of the unimodality of $S(0,t)\boxplus \mu$ was reduced to that of the classical convolution of $\mu$ and Cauchy distributions. Actually this kind of similarity between additive and multiplicative free convolutions were observed in several situations, for example in \cite{AH13}.  Anshelevich and Arizmendi \cite{AA} succeeded in systematically explaining those similarities: they proved that various results on additive free convolution can be transferred to multiplicative free convolution on $\mathbb T$ using the exponential mapping. However, unimodality seems out of the applicability of their approach. The main difficulty is twofold: the exponential mapping does not preserve unimodality; the key class $\mathcal L$ of probability measures on $\R$, defined in \cite{AA}, contains only few unimodal distributions (Cauchy and delta measures).

%%%%%%%%%%%%%

\subsection*{Acknowledgment}
T.H.\ is supported by JSPS Grant-in-Aid for Young Scientists (B) 15K17549 and for Scientific Research (B) 18H01115. The authors are financially supported by JSPS and MAEDI Japan--France Integrated Action Program (SAKURA).

%%%%%%%%%%%%%

%%%%%%%%%%%%%

\vspace{0.6cm}

{\it \hspace{-6mm}Takahiro Hasebe\\
Department of Mathematics, Hokkaido University,\\
Kita 10, Nishi 8, Kita-Ku, Sapporo, Hokkaido, 060-0810, Japan\\
email: thasebe@math.sci.hokudai.ac.jp}\\
\vspace{1mm}\\
\hspace{6mm}{\it \hspace{-6mm}Yuki Ueda\\
Department of Mathematics, Hokkaido University,\\
Kita 10, Nishi 8, Kita-Ku, Sapporo, Hokkaido, 060-0810, Japan\\
email: yuuki1114@math.sci.hokudai.ac.jp}

% 論文内容ここまで
%%%%%%%%%%%%%%%%%

\end{document}